\numberwithin{equation}{section}
\newtheorem{theorem}{Theorem}[section]
\newtheorem{prop}[theorem]{Proposition}
\newtheorem{cor}[theorem]{Corollary}
\theoremstyle{definition}
\newtheorem{definition}[theorem]{Definition}
\newtheorem{example}[theorem]{Example}
\newtheorem{remark}[theorem]{Remark}
\def\<{{\langle}}
\def\>{{\rangle}}
\def\a{{\alpha}}
\def\b{{\beta}}
\def\g{{\gamma}}
\def\Z{\mathbb Z}
\def\Q{\mathbb Q}
\def\R{\mathbb R}
\def\T{{\mathbb T}}
\def\S{{\mathbb S}}
\def\C{{\cal C}}
\def\B{\cal B}
\def\F{\mathbb F}
\def\a{\alpha}
\def\b{\beta}
\def\si{\sigma}
\def\t{\tau}
\def\k{{\kappa}}
\def\L{\cal L}
\def\La{\Lambda}
\def\M{{\cal M}}
\def\Rd{{\cal R}_d}
\def\s{{\bf s}}
\def\e{\epsilon}
\def\De{{\Delta}}
\def\ni{\noindent} 
\begin{document}

\title{Vertex-Colored Graphs, Bicycle Spaces and Mahler Measure}

\author{Kalyn R. Lamey
\and
Daniel S. Silver 
\and Susan G. Williams\thanks {The second and third authors are partially supported by the Simons Foundation.} }

\maketitle 


\begin{abstract} The space $\C$ of conservative vertex colorings (over a field $\F$) of a countable, locally finite graph $G$ is introduced.
When $G$ is connected, the subspace $\C^0$ of based colorings is shown to be isomorphic to the bicycle space of the graph. For graphs $G$ with a cofinite free 
$\Z^d$-action by automorphisms, $\C$ is dual to a finitely generated module over the polynomial ring $\F[x_1^{\pm 1}, \ldots, x_d^{\pm 1}]$ and for it polynomial invariants, the Laplacian polynomials $\De_k, k \ge 0$, are defined. Properties of the Laplacian polynomials are discussed. The logarithmic Mahler measure of $\De_0$ is characterized in terms of the growth of spanning trees.

MSC: 05C10, 37B10, 57M25, 82B20
\end{abstract}

\section{Introduction} \label{Intro} Graphs have been an important part of knot theory investigations since the nineteenth century. In particular, finite plane graphs correspond to alternating links via the medial construction (see section \ref{planegraphs}). The correspondence became especially fruitful in the mid 1980's when the Jones polynomial renewed the interest of many knot theorists in combinatorial methods while at the same time drawing the attention of mathematical physicists. 

Coloring methods for graphs also have a long history, one that stretches back at least as far as Francis Guthrie's Four Color conjecture of 1852. By contrast, coloring techniques in knot theory are relatively recent, mostly motivated by an observation in the 1963 textbook, \textit{Introduction to Knot Theory}, by Crowell and Fox \cite{CF63}. In view of the relationship between finite plane graphs and alternating links, it is not surprising that a corresponding theory of graph coloring exists. This is our starting point. However, by allowing nonplanar graphs and also countably infinite graphs that are locally finite, a richer theory emerges. 

Section \ref{conservativevertexcolorings} introduces the space $\C$ of conservative vertex colorings of a countable locally-finite graph $G$.
We identify the subspace $\C^0$ of based conservative vertex colorings with the bicycle space $\B$ of $G$. In section \ref{conservativeedgecolorings} we define the space of conservative edge colorings of $G$, which we show is naturally isomorphic to $\C^0$. When $G$ is embedded in the plane, yet a third type of coloring, coloring vertices and faces of $G$, is possible. The resulting space, called the Dehn colorings of $G$, is shown to be isomorphic to the space $\C$. We use it to extend and sharpen the known result that residues of the medial link components generate $\B$.

When $G$ admits a \emph{cofinite} free $\Z^d$-action by automorphisms (that is, an action with finite quotient), the vertex coloring space is dual to a finitely generated module over the polynomial ring $\F[x_1^{\pm 1}, \ldots, x_d^{\pm 1}]$. Techniques of commutative algebra are used to define  \textit{Laplacian polynomials} $\De_k, k \ge 0$, of $G$. In sections \ref{graphswithfreeZsymmetry} and \ref{graphswithfreeZ2symmetry} we consider infinite graphs that admit cofinite free $\Z^d$-symmetry, $d=1,2$. One may think of such a graph as the lift to the universal cover of a finite graph $\overline{G}$ in the annulus or torus. When $d=1$ and $\F$ is the 2-element field, we prove that the degree of the first nonzero polynomial $\De_k$ is twice the number of noncompact components of the medial graph of $G$; when $\F = \Q$, the degree of $\De_0$ is shown to be the minimum number of vertices that must be deleted from $\overline{G}$ in order to enclose the graph in a disk.

In section \ref{mahler} we enter the realm of algebraic dynamical systems. The compact group of conservative vertex colorings with elements of the circle $\R/\Z$ is a dynamical system with $d$ commuting automorphisms that is dual to  a finitely generated module over $\Z[x_1^{\pm 1}, \ldots, x_d^{\pm 1}]$. Using a theorem of D. Lind, K. Schmidt and T. Ward \cite{LSW90}, \cite{Sc95}, we characterize the logarithmic Mahler measure of $\De_0$ in terms of the growth of the number of spanning trees. 
This characterization was previously shown for connected graphs, first by R. Solomyak \cite{So98} in the case where the vertex set is $\Z^d$ and then for more general vertex sets  by R. Lyons \cite{Ly05}.  The algebraic dynamical approach here gives substantially simplified proofs, and it explains why Mahler measure appears.  The growth rate of spanning trees, also called the thermodynamic limit or bulk limit, has  been computed independently for many classic examples using purely analytical methods involving partition functions (cf.  \cite{Wu77}, \cite{SW00}, \cite{CS06}, \cite{TW10}.)   The coincidence of these values with Mahler measure was observed in \cite{GR12}.  \bigskip

We are grateful to Oliver Dasbach, Iain Moffatt and Lorenzo Traldi for their comments and suggestions, and to Doug Lind for drawing our attention to Solomyak's work. 

\section{Conservative vertex colorings} \label{conservativevertexcolorings}

Throughout, $G$ is assumed to be a countable locally finite graph. We denote the field of $p$ elements by $GF(p)$.

\begin{definition} A \emph{vertex coloring} of $G$ is an assignment of elements (called \emph{colors}) of a field $\F$ to the vertices of $G$. A vertex coloring is \emph{conservative} if, for every vertex $v \in V$, the \emph{Laplacian vertex condition} holds: \medskip
\begin{equation} \label{LVC} d \cdot \a = \sum_{i=1}^d \a_i, \end{equation}
where  $d$ is the degree of $v$, $\a$ is the color assigned to $v$, and $\a_1, \ldots, \a_d$ are the colors assigned to vertices adjacent to $v$, counted with multiplicity in case of multiple edges. (A self-loop at $v$ is counted as two edges from $v$ to $v$.)\end{definition} 

The set of vertex colorings will be identified with $\F^V$, where $V$ is the vertex set of $G$. Conservative vertex colorings of $G$ form a vector space under coordinate-wise addition and scalar multiplication.  We denote the vector space by $\C$. 

We will say two vertex colorings are \emph{equivalent} if they differ by a constant vertex coloring. The constant vertex colorings are a subspace of $\C$. We denote the quotient space by $\C^0$, and refer to it as the space of \emph{based vertex colorings} of $G$. If $G$ is connected and a \emph{base vertex} of $G$ is selected, then any element of $\C^0$ is uniquely represented by a vertex coloring that assigns zero to that vertex.  In this case, $\C^0$ can be regarded as a subspace of $\C$. 

Assume that the vertex and edge sets of $G$ are $V=\{v_1, v_2,\ldots \}$ and $E=\{e_1, e_2, \ldots\}$, respectively. The \emph{Laplacian matrix} $L= (L_{ij})$ is $D-A$, where 
$D$ is the diagonal matrix of vertex degrees, and $A$ is the adjacency matrix.

If $V$ is infinite, then $L$ is a countably infinite matrix. Nevertheless, each row and column of $L$ has only finitely many nonzero terms, and hence many notions from linear algebra remain well defined. In particular, we can regard $L$ as an endomorphism of the space $\F^V$ of vertex colorings of $G$.  The following is immediate.

\begin{prop} The space $\C$ of conservative vertex colorings of $G$ is the kernel of the Laplacian matrix $L$. 

\end{prop}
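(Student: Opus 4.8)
The plan is to unpack the definitions and observe that the Laplacian vertex condition is precisely the statement that each coordinate of $L\a$ vanishes. Let $\a \in \F^V$ be a vertex coloring, regarded as a column vector indexed by the vertices, and fix a vertex $v = v_i$ of degree $d$. The $i$-th entry of the product $L\a$ is $\sum_j L_{ij}\a_j$, where $\a_j$ is the color of $v_j$. By the definition $L = D - A$, the diagonal entry $L_{ii}$ equals $d$, the degree of $v_i$, while for $j \neq i$ the entry $-L_{ij} = A_{ij}$ counts the number of edges joining $v_i$ to $v_j$ (with a self-loop contributing $2$ to the diagonal of $A$, matching the stated convention).

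First I would compute $(L\a)_i = d\cdot\a_i - \sum_{j} A_{ij}\a_j$. The sum $\sum_j A_{ij}\a_j$ tallies, with multiplicity, the colors of all neighbors of $v_i$, and since the degree $d$ counts edges with multiplicity (a self-loop as two), this sum is exactly $\sum_{k=1}^d \a_k$ in the notation of equation \eqref{LVC}. Hence $(L\a)_i = 0$ if and only if $d\cdot\a_i = \sum_{k=1}^d \a_k$, which is precisely the Laplacian vertex condition at $v_i$. Running this over all vertices $v_i \in V$ shows that $L\a = 0$ if and only if the coloring $\a$ is conservative, i.e. $\a \in \C$ exactly when $\a \in \ker L$.

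The only genuine point requiring care is the bookkeeping of multiplicities: I would verify that the self-loop convention (a loop at $v$ counts as two edges in the degree $d$) is consistent with how a self-loop enters the adjacency matrix $A$, so that the diagonal contribution cancels correctly and no spurious term survives. I would also note explicitly that $L$ is well defined as an endomorphism of $\F^V$ even when $V$ is infinite, because local finiteness guarantees each row of $L$ has only finitely many nonzero entries, so the sum $\sum_j L_{ij}\a_j$ is finite for every $i$ and every $\a$; this is what makes $\ker L$ meaningful. Given the preceding paragraphs in the excerpt, this finiteness has already been established, so the proposition follows immediately from the coordinatewise translation of \eqref{LVC}, and indeed the authors themselves remark that it is immediate.
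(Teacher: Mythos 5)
Your proof is correct and matches the paper's treatment: the paper offers no argument beyond calling the proposition immediate, and your coordinatewise unpacking of $(L\a)_i = d\cdot\a_i - \sum_j A_{ij}\a_j$ is exactly the verification that justifies that remark, including the correct handling of the self-loop convention and the role of local finiteness.
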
 

We orient the edges of $G$. 
The \emph{incidence matrix} $Q = (Q_{ij})$ is defined by 
\begin{equation*} Q_{ij}=  \begin{cases} 1 & \text{if\ } e_j  \text{\ terminates at\ } v_i,\\ -1 & \text{if\ } e_j
\text{\ originates at\ } v_i, \\ 0 & \text{\ else\ } \end{cases} \end{equation*}

One checks that $L = Q Q^T$, regardless of the choice of orientation.  

\begin{definition} The \emph{cut space} or \emph{cocycle space} $W$ of $G$ (over $\F$) is the subspace of $\F^E$ consisting of all, possibly infinite, linear combinations of row vectors of $Q$. \end{definition}

We regard $Q^T$ as a linear mapping from $\F^V$ to $\F^E$. It is well known and easy to see that if $G$ is connected, then the kernel of the mapping is 1-dimensional, and consists of the constant vertex colorings.

Denote by $\cdot$ the standard inner product on $\F^E$; that is,  $u \cdot w = \sum u_i w_i$,  whenever only finitely many summands are nonzero. 

\begin{definition} The \emph{cycle space} of $G$ (over $\F$) is the space of all $u \in \F^E$ with $u\cdot w =0$ for every $w\in W$ for which the product is defined.  We denote this space by $W^\perp$.
The \emph{bicycle space} $\B$ is $W \cap W^\perp$. \end{definition}

A vector $w \in \F^E$ is in the cut-space $W$ if and only if it is in the image of $Q^T$. Moreover, $w \in W^\perp$ if and only if $Qw = 0$. Since the kernel of $Q$ consists of the constant vertex colorings of $G$ when the graph is connected, we have: 

\begin{prop} \label{main} Assume that $G$ is connected. The based vertex coloring space $\C^0$ is isomorphic to the bicycle space $\B$. \end{prop}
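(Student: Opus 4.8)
The plan is to exhibit the map $Q^T \colon \F^V \to \F^E$ as the source of the isomorphism and to read off $\C^0 \cong \B$ from the first isomorphism theorem. The excerpt has already packaged the two facts I need: a vector $w \in \F^E$ lies in the cut space $W$ precisely when it lies in the image of $Q^T$, and $w \in W^\perp$ precisely when $Qw = 0$. Since $G$ is locally finite, each row of $Q$ and each column of $Q^T$ has only finitely many nonzero entries, so $Q^T\a$ is defined for every $\a \in \F^V$ and the identity $L = QQ^T$ holds as an equation of endomorphisms of $\F^V$.

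First I would show that $Q^T$ carries $\C$ into $\B$. For any $\a \in \F^V$ the vector $Q^T\a$ lies in $W$ automatically, being in the image of $Q^T$. Moreover $\a \in \C$ means $L\a = 0$, i.e.\ $Q(Q^T\a) = 0$, which by the stated criterion says exactly that $Q^T\a \in W^\perp$. Hence $\a \in \C$ forces $Q^T\a \in W \cap W^\perp = \B$, and restriction gives a linear map $Q^T \colon \C \to \B$.

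Next I would check surjectivity and compute the kernel. Given $b \in \B$, membership $b \in W$ furnishes some $\a \in \F^V$ with $Q^T\a = b$; membership $b \in W^\perp$ gives $Qb = 0$, whence $L\a = Q(Q^T\a) = Qb = 0$, so $\a \in \C$. Thus every bicycle arises as $Q^T\a$ for a conservative $\a$, and the map is onto. For the kernel, $\a \in \C$ with $Q^T\a = 0$ lies in $\ker Q^T$, which (as noted in the excerpt, using connectedness of $G$) is the one-dimensional space of constant vertex colorings; and constant colorings are indeed conservative, satisfying (\ref{LVC}) trivially. Therefore the kernel of $Q^T|_\C$ is precisely the subspace of constant colorings, and the first isomorphism theorem yields $\C/(\text{constants}) \cong \B$. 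Since $\C^0$ is by definition this quotient, $\C^0 \cong \B$.

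I do not expect a genuine obstacle here; the only point demanding care is the infinite-dimensional bookkeeping. Because $V$ and $E$ may be countably infinite, I would confirm that each matrix product invoked is a finite sum at every coordinate --- guaranteed by local finiteness for $Q^T\a$ and $Qb$, and by the row/column finiteness of $L$ noted earlier --- so that $L = QQ^T$ and the equivalences $b \in W \Leftrightarrow b \in \operatorname{im} Q^T$ and $b \in W^\perp \Leftrightarrow Qb = 0$ are applied legitimately rather than only in the finite setting.
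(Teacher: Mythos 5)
Your proof is correct and follows essentially the same route as the paper: the paper's argument is exactly the observation that $W = \operatorname{im} Q^T$, $W^\perp = \ker Q$, and $\ker Q^T$ is the constants when $G$ is connected, so that $Q^T$ induces an isomorphism $\C/(\text{constants}) \to W \cap W^\perp$. You have merely written out the surjectivity and kernel computations that the paper leaves implicit.
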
 

\begin{remark}   When $\F = GF(2)$, vectors $w \in \F^E$ correspond to subsets of $E$: an edge is included if and only if its coordinate in $w$ is nonzero. Vectors $w$ such that $Qw =0$ correspond to subgraphs in which every vertex has even degree. Bonnington and Richter \cite{BR03} call any such subgraph a \emph{cycle} of $G$, which explains the term ``cycle space." The term ``bicycle" arises since cycles in $\B$ are also cocycles. 

The reader is warned that there is not complete agreement in the literature about the definition of ``cycle" for infinite graphs. (See for example \cite{RV08}.)

\end{remark} 
\section{Conservative edge colorings} \label{conservativeedgecolorings} An element of $\F^E$ may be regarded as a coloring of the edge set.  Bicycles of connected graphs are edge colorings that satisfy two conservation laws, as we now show. We first orient $G$. 

\begin{definition} An \emph{edge coloring} is an assignment $\b$ of colors to the  edges of an oriented graph $G$.
\medskip

\noindent (1) The edge coloring $\b$ satisfies the \emph{cycle condition} if for every closed cycle
$e_1, \ldots, e_m$ in $G$ we have 

\begin{equation*}\label{CC} \sum \e_i \b(e_i)=0, \end{equation*} \medskip
where $\e_i=1$ if the edge $e_i$ is traversed in the preferred direction, and $\e_i=-1$ otherwise. \medskip

\noindent (2) The edge orientation $\b$ satisfies the \emph{Kirchoff vertex condition} if for every vertex $v$ with incident edges $e_1, \ldots, e_n$, 
\begin{equation*}\label{KVC} \sum \eta_i \b(e_i) =0, \end{equation*} \end{definition} \medskip
\noindent where $\eta_i =1$ if $v$ is the terminal vertex of $e_i$, and $\eta_i =-1$ if $v$ is the initial vertex.  \medskip

\noindent (3) A edge coloring is \emph{conservative} if it satisfies both the cycle condition and the Kirchoff vertex condition. 
\bigskip

An element $\beta=Q^T\alpha$ of the cut space $W$ assigns to an edge directed from $v_i$ to $v_j$ the color $\a(v_j)-\a(v_i)$.  Such a coloring clearly satisfies the cycle condition.  Conversely, suppose $\beta\in \F^E$ satisfies the cycle condition.  We can assign an arbitrary color to a basing vertex in each connected component of $G$, and extend along a spanning tree to obtain a  vertex coloring $\alpha$ with $\beta=Q^T \alpha$.  The cycle condition ensures that edges not on the spanning tree receive the right colors.

An edge coloring $\beta$ satisfies the Kirchhoff vertex condition if and only if $Q\beta $ is trivial, that is, $\beta\in W^\perp$. In summary, we have: 

\begin{prop} Let $G$ be a locally finite oriented graph. The set of conservative edge colorings of $G$ is equal to the bicycle space $\B$ of the graph.  \end{prop}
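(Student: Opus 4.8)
The plan is to prove the asserted set equality by matching each of the two combinatorial conservation laws to one of the two algebraic subspaces whose intersection defines $\B$. Concretely, I would show that an edge coloring $\b$ satisfies the Kirchhoff vertex condition precisely when $\b$ lies in the cycle space $W^\perp$, and that $\b$ satisfies the cycle condition precisely when $\b$ lies in the cut space $W$. Since a conservative edge coloring is by definition one satisfying both conditions, the conjunction of these two equivalences identifies the conservative edge colorings with $W \cap W^\perp = \B$.

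The Kirchhoff equivalence is a direct reading of the incidence matrix. At a vertex $v$ with incident edges $e_1, \ldots, e_n$, the quantity $\sum \eta_i \b(e_i)$ in the vertex condition is, by the definition of $Q$, exactly the coordinate $(Q\b)_v$, the sign $\eta_i$ recording whether $v$ is the head or tail of $e_i$. Thus the Kirchhoff condition holds at every vertex if and only if $Q\b = 0$, which by the characterization recorded earlier is equivalent to $\b \in W^\perp$; local finiteness makes each such sum finite, so no convergence issue arises. The forward half of the cycle equivalence is equally quick: if $\b = Q^T\a$, then $\b$ assigns to an edge from $v_i$ to $v_j$ the difference $\a(v_j) - \a(v_i)$, and these signed differences telescope to zero around any closed cycle.

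The converse of the cycle equivalence is the substantive step and the one I expect to be the main obstacle. Given $\b$ satisfying the cycle condition, I must construct a vertex coloring $\a$ with $Q^T\a = \b$. I would select a base vertex in each connected component, assign it an arbitrary color, fix a spanning tree of that component, and define $\a(v)$ as the sum of the signed colors of $\b$ along the tree path from the base vertex to $v$. The cycle condition is exactly what guarantees that this value is independent of the chosen path, and it forces the edges off the spanning tree to carry the correct differences, so that $Q^T\a = \b$ holds on all edges. For the infinite setting one only needs that a connected locally finite graph admits a spanning tree and that each defining path sum is finite; both hold because every path uses finitely many edges and every vertex has finite degree. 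With the two equivalences established, the Proposition is immediate from $\B = W \cap W^\perp$.
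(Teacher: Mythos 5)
Your proposal is correct and follows essentially the same route as the paper: the Kirchhoff vertex condition is read off as $Q\b=0$, i.e.\ membership in $W^\perp$, and the cycle condition is shown equivalent to membership in $W$ via the telescoping computation for $\b=Q^T\a$ in one direction and the spanning-tree integration argument in the other. The only difference is that you spell out the path-independence and local-finiteness points slightly more explicitly than the paper does.
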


\begin{remark} An equivalent theory of face colorings can also be defined. Compare with \cite{CSW14}.  \end{remark}


\section{Plane graphs} \label{planegraphs} By a \emph{plane graph} we mean a graph $G$ embedded in the plane without accumulation points. Then $G$ partitions the plane into faces, some of which might be non-compact. 

\begin{definition} Let $G$ be a locally finite plane graph. A \emph{Dehn coloring} of $G$ is an assignment of colors to the vertices and faces of $G$ such that 
a chosen \emph{base face} is colored zero, and at any edge the \emph{conservative coloring condition} is satisfied: 
\begin{equation}\label{CCC} \a_1+ \g_1=\a_2 + \g_2, \end{equation} 
where $\a_1, \g_1, \a_2, \g_2$ are the colors assigned to vertices and faces $v_1, R_1, v_2, R_2$, as in Figure \ref{coloring}. 
\begin{figure}
\begin{center}
\includegraphics[height=1 in]{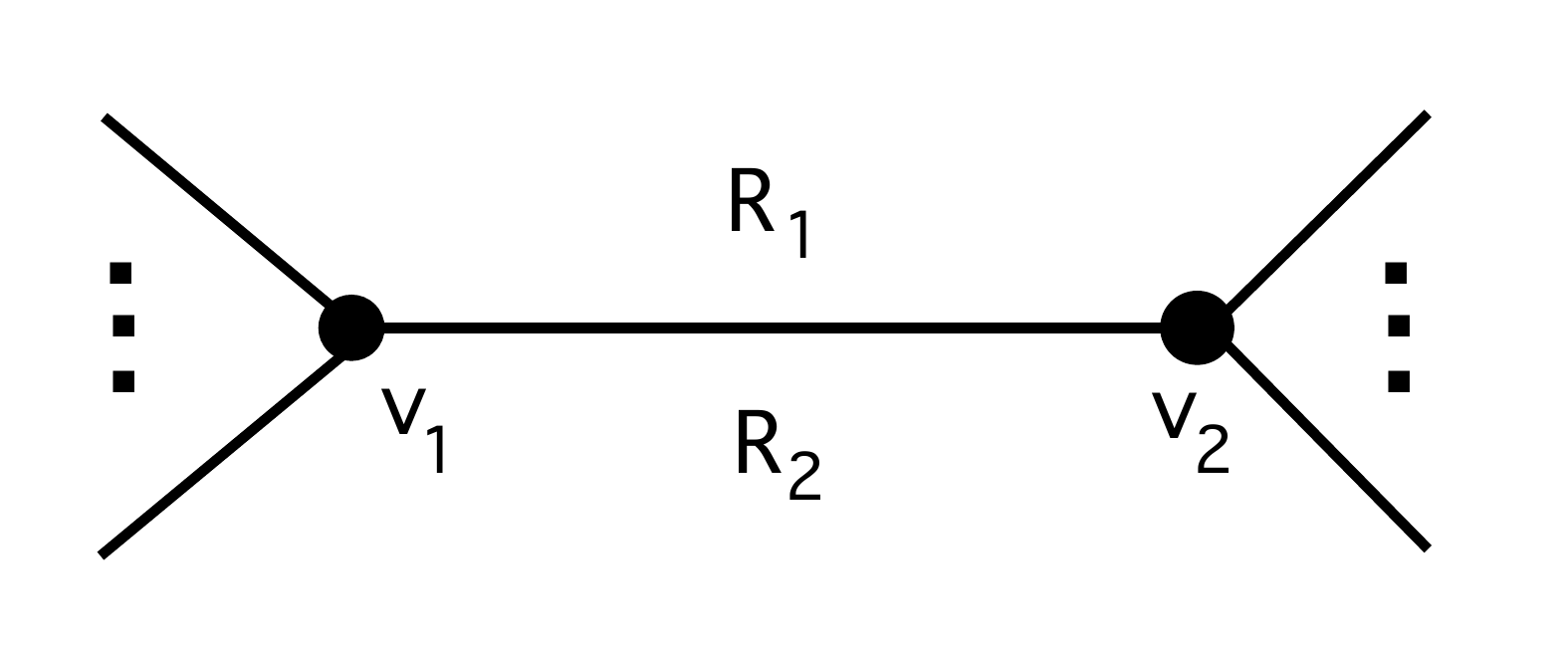}
\caption{Dehn Coloring Condition}
\label{coloring}
\end{center}
\end{figure}
\end{definition}

Like conservative vertex colorings of $G$, Dehn colorings form a vector space. We will denote it by $\C'$. It is not difficult to see that the underlying vertex assignment of any Dehn coloring is a conservative vertex coloring. (If we add the equations (\ref{CCC}) for edges adjacent to any vertex, then the face colors $\g_i$ cancel in pairs.) Let $r$ denote the restriction from $\C'$ to the vector space $\C$ of conservative vertex colorings.

\begin{prop} The  mapping  $r$ is an isomorphism from $\C'$ to $\C$.  
\end{prop}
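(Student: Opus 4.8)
The plan is to verify that $r$ is linear, injective, and surjective; linearity is immediate from the coordinate-wise vector space structures, so all the content is in injectivity and surjectivity. I expect the surjectivity argument to be the main obstacle, since it requires constructing face colors and checking a consistency (path-independence) condition, whereas injectivity is short.

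For injectivity, suppose a Dehn coloring lies in the kernel of $r$, so every vertex receives the color $0$. Then the conservative coloring condition (\ref{CCC}) at each edge degenerates to $\g_1 = \g_2$, i.e.\ the two faces meeting along any edge receive equal colors. Because the face-adjacency (dual) graph of a plane graph is connected, every face can be joined to the base face by a finite sequence of edge-crossings, so every face inherits the color $0$ of the base face. Hence the Dehn coloring is trivial and $r$ is injective.

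For surjectivity I would start from an arbitrary conservative vertex coloring $\a \in \C$ and build a compatible face assignment $\g$ by \emph{integration}. Set $\g(R_0) = 0$ on the base face. For any other face $R$, choose a dual path from $R_0$ to $R$ (a finite sequence of edges of $G$ crossed in turn) and define $\g(R)$ as the accumulated sum of the jumps $\g_2 - \g_1 = \a_1 - \a_2$ dictated by (\ref{CCC}), with signs read off from Figure \ref{coloring}. Any $\g$ produced this way automatically satisfies (\ref{CCC}) and restricts to $\a$, so it remains only to show that $\g(R)$ does not depend on the chosen path; once this is done, the pair $(\a,\g)$ is a Dehn coloring with $r(\a,\g)=\a$.

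The key point, and the main obstacle, is exactly this path-independence, and it is here that planarity and the conservative hypothesis combine. Two dual paths from $R_0$ to $R$ differ by a closed dual walk, so it suffices to show that the accumulated jump around any closed dual walk vanishes. In the plane such a walk bounds a compact region enclosing only finitely many vertices of $G$, and is therefore a finite $\Z$-combination of the elementary loops that encircle single vertices; hence it is enough to treat the loop around one vertex $v$. Summing the jumps around $v$ amounts to adding the equations (\ref{CCC}) for the edges incident to $v$, and, as already observed before the statement, the face colors then cancel in pairs, leaving precisely the Laplacian vertex condition (\ref{LVC}) at $v$. Since $\a$ is conservative, (\ref{LVC}) holds, so the accumulated jump around $v$ is zero; consequently the jump around every closed dual walk is zero and $\g$ is well defined. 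This establishes surjectivity and completes the proof that $r$ is an isomorphism.
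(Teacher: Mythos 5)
Your proposal is correct and follows essentially the same route as the paper: both define the face colors by integrating outward from the base face and reduce path-independence to small loops around single vertices, where summing the condition (\ref{CCC}) over the incident edges recovers the Laplacian vertex condition (\ref{LVC}). The only cosmetic difference is that you package the argument as injectivity plus surjectivity, whereas the paper exhibits the integration map $e$ as an explicit two-sided inverse of $r$.
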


\begin{proof} Consider any conservative vertex coloring of $G$. Beginning at the base face of $G$, which is colored with zero, ``integrate" along any path to another face, a path that is transverse to edges and does not pass through any vertex. If we arrive at an uncolored face $F$ from a face colored $\g$ by crossing an edge with vertices labeled $\a$ and $\a'$, then assign to $F$ the color $\a- \a' +\g$, where $\a$ is the label on the vertex to the left of the path. Such an assignment is path independent if and only if integrating around any small closed path surrounding a vertex returns the same color with which we began. It is easy to check that this will be the case if and only if condition (\ref{LVC}) is satisfied at every vertex. In this way we obtain a map $e:\C \to \C'$. It is not difficult to see that $e$ is a homomorphism. That $r\circ e$ is the identity map on $\C$ is clear. The path-independence of integration implies that $e \circ r$ is the identity map on $\C'$. 
\end{proof} 

Medial graphs provide a bridge between planar graphs and links.
The \emph{medial graph} of a locally finite plane graph $G$ is the plane graph  $M(G)$ obtained from the boundary of a thin regular neighborhood of $G$ by pinching each edge to create a vertex of degree 4, as in Figure \ref{brings2}.

The medial graph $M(G)$ is the projection in the plane of an alternating link $\L$ (possibly with noncompact components if $G$ is infinite) called the \emph{medial link}. It is well defined up to replacement of every crossing by its opposite. By a \emph{component of the medial graph} we will mean the projection of a component of $\L$.  

\begin{figure}[ht]
\begin{minipage}[b]{0.45\linewidth}
\centering
\includegraphics[width=\textwidth]{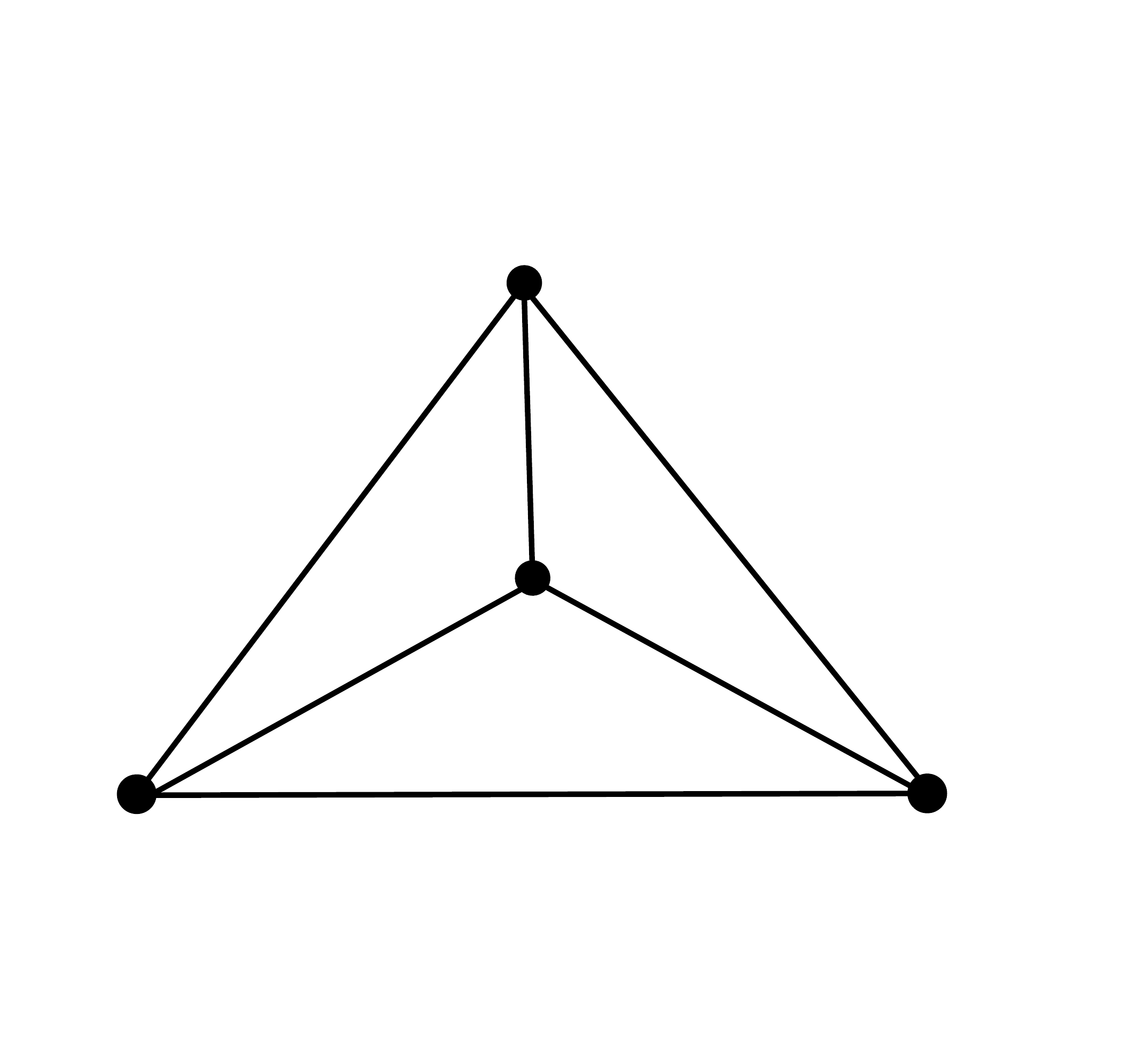}
\caption{Plane Graph $G$}
\label{fig:figure1}
\end{minipage}
\hspace{0.5cm}
\begin{minipage}[b]{0.45\linewidth}
\centering
\includegraphics[width=\textwidth]{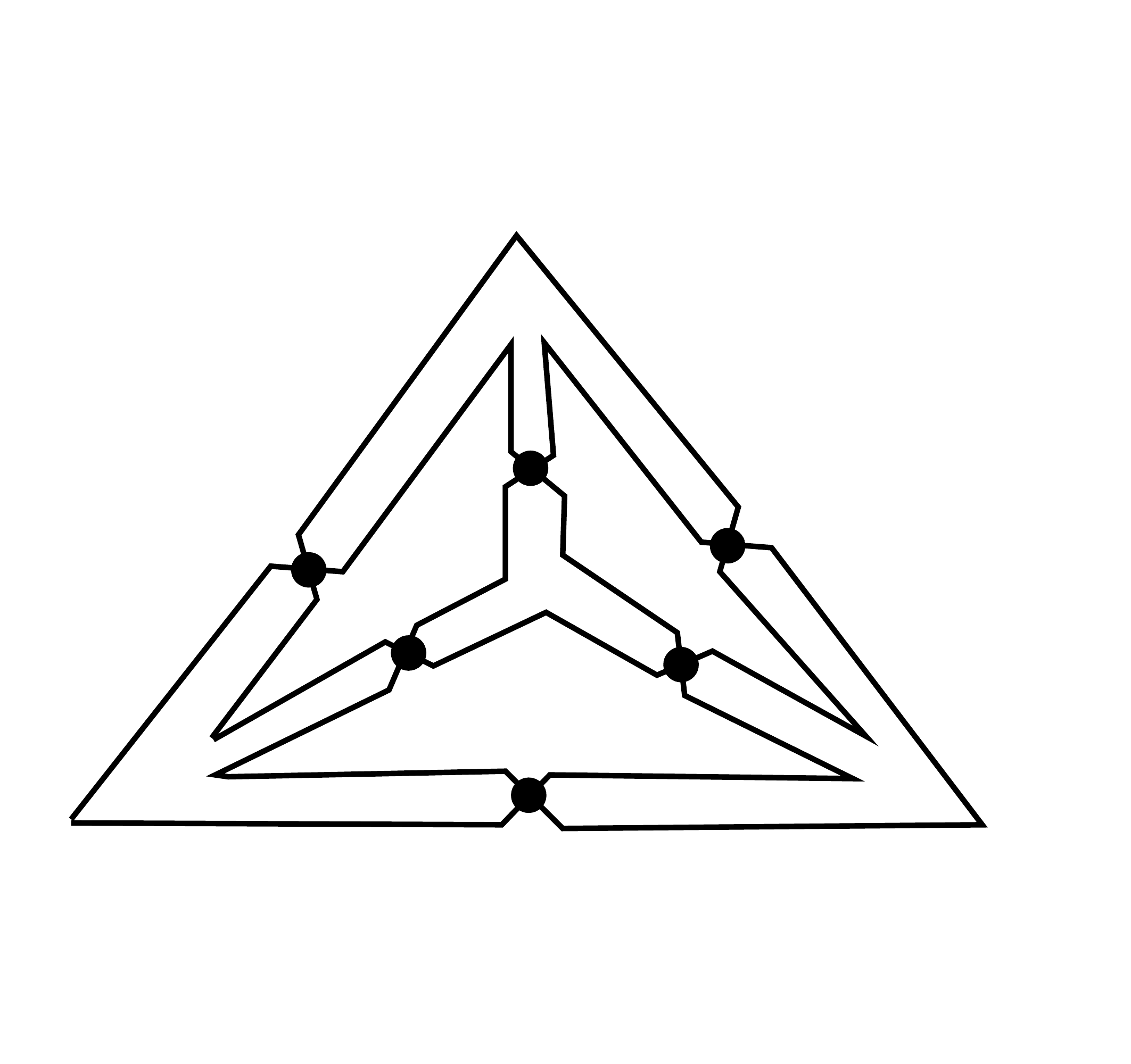}
\caption{Medial Graph $M(G)$}
\label{brings2}
\end{minipage}
\end{figure}

\begin{remark} (1) The vector space of Dehn colorings of $G$ is isomorphic to a vector space of  Dehn colorings of  the link $\L$ (see \cite{Ka83} \cite{CSW14}). It is well known that $\C^0$ is isomorphic to the first homology group of the 2-fold cover of $\L$ with coefficients in $\F$. Classes of lifts of meridians, one from all but one component of $\L$, comprise a basis. While much of our motivation derives from these facts, we do not make explicit use of them here.   \smallskip

(2) When $G$ is finite, $L$ is the Goeritz matrix of the  alternating link $\L$  associated to $G$. (See \cite{Tr85}.)

\end{remark}


For the remainder of the section, $\F = GF(2)$. In this case, edge orientations are not needed. Recall that  edge colorings correspond bijectively to subsets of $E$. 

Given a component of the medial graph $M(G)$, we consider the subset of $E$ consisting of those edges crossed by the component exactly once. Following \cite{My09} we refer to the edge set as the \emph{residue} of the component. 

In \cite{Sh75} H. Shank proved that for any finite plane graph $G$ the residues of components of $M(G)$ span $\B$. (Shank ascribed this to J.D. Horton.) An analogous result was shown in \cite{My09} for infinite, locally finite plane graphs, but where cycles have a more restrictive definition. 

In \cite{SW14}  the second and third authors gave a very short, elementary proof of the following theorem for finite plane graphs $G$ using an idea borrowed from knot theory. 
The argument here allows for the medial graph to have any number of components, including non-compact components. After finishing the argument, the authors became aware of \cite{BKR07}, in which a similar argument is used in the case that $G$ is finite. 

\begin{theorem} (cf. Theorem 17.3.5 of \cite{GR01}) Assume that $G$ is connected. For $\F=GF(2)$, the residues of all but one arbitrary component of the medial graph $M(G)$ form a basis for the bicycle space $\B$ of the graph $G$. \end{theorem}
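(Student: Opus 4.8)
The plan is to route everything through the identifications already in place, namely $\C^0 \cong \B$ (Proposition \ref{main}) and $\C \cong \C'$, and to reduce the statement to the combinatorics of Dehn colorings of the $4$-valent medial graph $M(G)$. To a component $C$ of $M(G)$ I would first associate a Dehn coloring $\phi_C$: color the base face $0$, and color an arbitrary face $F$ by the number, reduced mod $2$, of transverse intersections of $C$ with any path from the base face to $F$ that avoids the vertices of $M(G)$. Because $C$ is a closed $1$-manifold this count is path-independent mod $2$ (finite paths suffice since $G$ is locally finite), and condition (\ref{CCC}) holds at each vertex $x_e$ of $M(G)$ because $C$ meets $x_e$ in complete strands; hence $\phi_C$ is a genuine Dehn coloring, well defined even when $C$ is non-compact. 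An edge of $G$ is crossed by $C$ exactly once precisely when $\phi_C$ changes value between the two vertex-faces at the corresponding crossing, so the cut $Q^T\alpha_C$ determined by the vertex part $\alpha_C$ of $\phi_C$ is exactly the residue of $C$. Since the underlying vertex assignment of any Dehn coloring is conservative, $\alpha_C \in \C$, whence $Q^T\alpha_C \in \ker Q = W^\perp$ as well as in $W$; thus each residue is a bicycle.

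For spanning I would start from an arbitrary $b \in \B \cong \C^0$, view it as a conservative vertex coloring, and extend it to a Dehn coloring via $\C \cong \C'$. The crux is a local lemma: over $GF(2)$ the condition (\ref{CCC}) at a $4$-valent vertex $x_e$ forces the set of edges of $M(G)$ separating a $0$-colored face from a $1$-colored face to contain, at $x_e$, either both or neither edge of each of the two strands through $x_e$ (a short case check). Consequently the color-boundary of the $1$-colored faces is a disjoint union $\bigcup_{C \in S} C$ of medial components. Matching ``separates the two vertex-faces at $x_e$'' with ``crossed exactly once'' then yields $b = \sum_{C \in S} \mathrm{residue}(C)$, so the residues span $\B$. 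These possibly infinite sums are coordinatewise finite, hence legitimate, because each edge of $G$ meets only the two strands at its crossing.

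It remains to pin down the relations. Summing all residues counts, at each $x_e$, the two strands through it: the edge lies in two residues if the strands belong to distinct components and in none if they coincide, so $\sum_{\text{all } C} \mathrm{residue}(C) = 0$. Conversely, $\sum_{C \in S}\mathrm{residue}(C)=0$ says every $x_e$ carries an even number of $S$-strands, i.e. at each crossing joining two distinct components either both or neither lies in $S$. Since $G$ is connected, $M(G)$ is connected, and therefore the graph on components that joins two components sharing a crossing is connected, forcing $S = \varnothing$ or $S$ equal to everything. Thus the relation space is one-dimensional, generated by the total sum. Combined with spanning, this shows that deleting any single component leaves a linearly independent spanning set, i.e. a basis; in particular $\dim \B$ is one less than the number of components.

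The step I expect to be the main obstacle is the local lemma that (\ref{CCC}) makes the color-boundary respect the strand (smoothing) structure at every crossing: this is exactly what upgrades ``the boundary is an even subgraph of $M(G)$'' to ``the boundary is a union of medial components,'' and it is the precise point where planarity and the $4$-valence of $M(G)$ enter. A secondary but genuine care point is the non-compact setting, where I must ensure the crossing-number coloring and the identity $b=\sum_{C\in S}\mathrm{residue}(C)$ remain valid when the components and the set $S$ are infinite; this rests on local finiteness, via finite paths and the fact that each edge meets only two strands.
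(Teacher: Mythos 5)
Your argument is correct and follows essentially the same route as the paper: both identify mod-$2$ colorings of the components of $M(G)$ with Dehn colorings by integrating intersection numbers along paths from the base face, and both rest on the same local observation at a crossing that condition (\ref{CCC}) forces the color boundary to respect the two strands (equivalently, that the quantity $\a+\g$ is constant along a medial component). The only real difference is bookkeeping: the paper packages this as an isomorphism $\M^0\cong\C^0\cong\B$ under which the basic component-colorings map to residues, while you verify spanning and the one-dimensionality of the relation space directly, the latter via connectivity of the component-adjacency graph rather than via the explicit two-sided inverse.
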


\begin{proof} Regard colorings with $0, 1$ of components of $M(G)$ as the elements of a vector space $\M$, with addition and scalar multiplication defined in the obvious way. 
There is a natural isomorphism between $\M$ and the space $\C$ of conservative vertex colorings of $G$. Given an element of $\M$, integrate along paths from the base face in order to color all vertices and faces of $G$. Each time we cross a component of $M(G)$ colored with $1$, we change color, until we reach the desired vertex or face. (Here a \emph{face} is considered to be outside the thin regular neighborhood of $G$ by which we describe $M(G)$, as in Figure \ref{brings2}.) The component can be regarded as a smoothly immersed curve in the plane. By considering intersection numbers modulo 2 (or using the Jordan Curve Theorem) we see that the coloring  we obtain is path independent and hence well defined. 

The assignment defines a homomorphism from $\M$ to $\C$. An inverse homomorphism is easily defined. Consider any non-crossing point on a component of $M(G)$. On one side is the pinched regular neighborhood of $G$ containing a single vertex of the graph. On the other is a face of $G$. Assign to that component of $M(G)$  the sum of the colors of the vertex and face. The condition (\ref{CCC}) ensures that the assignment does not depend on the  point of the component. 

Like $\C$, the vector space $\M$ has a subspace of constant colorings. Let $\M^0$ denote the quotient space. The isomorphism from $\M$ to $\C$ induces one from $\M^0$ to $\C^0$. It can be made explicit by choosing a  \emph{base vertex} of $G$ near a \emph{base component} of $M(G)$, and requiring that each be colored with zero. 

By Proposition \ref{main}, $\C^0$ is isomorphic to $\B$, which we identified with the space of conservative edge colorings of $G$. Given a basic element of $\M^0$, a coloring that assigns $1$ to a non-base component and zero to the others, we see that the associated vertex coloring of $G$ assigns different colors to vertices of an edge precisely when the component intersects that edge exactly once. Since these are the edges colored $1$ by the associated edge coloring, the proof is complete. \end{proof}

We conclude the section with an example that illustrates most of the ideas so far. 
Examples involving infinite graphs appear in the next section. 

\begin{example} Consider the complete graph $G$ on four vertices embedded in the plane, as in Figure \ref{brings3}. The incidence matrix is: 
$$Q  = \begin{pmatrix} -1 & 0 & 1 & -1 & 0 & 0\\ 1 & -1 & 0 & 0 & -1& 0 \\
0 & 1 & -1 & 0 & 0 & -1 \\ 0 & 0 & 0 & 1 & 1& 1 \end{pmatrix}$$
while the Laplacian matrix is:
$$L = \begin{pmatrix} 3 & -1 & -1 & -1\\ -1 & 3 & -1 & -1 \\ -1 & -1 & 3 & -1\\ -1& -1& -1& 3 \end{pmatrix}. $$
We choose $v_1$ to be a base vertex. Then the bicycle space $\B$ is isomorphic to the nullspace of the principal minor 
$$ L_0= \begin{pmatrix} 3 & -1 & -1 \\ -1 & 3 & -1  \\ -1 & -1 & 3  \end{pmatrix}$$
with rows and columns corresponding to $v_2, v_3, v_4$. 
Since the determinant of $L_0$ is 16, $\B$ is trivial unless $\F$ has characteristic 2. 

When $\F = GF(2)$, the space $\C^0$ of based conservative vertex colorings is the nullspace of $L_0$, which has basis $(1, 1, 0)^T, (0, 1, 1)^T\in V^3$. Recall that each of these basis vectors represents a coloring of the vertices of $G$ using $0, 1$ with the chosen base vertex $v_1$ receiving zero. A basis vector for $\B$ is obtained from each by selecting the edges that have differently colored vertices. This gives $(1, 0, 1, 0, 1, 1)^T, (1,1,0,1,0,1)^T \in E^6$. (Alternatively, the first vector is the sum of the second and third rows of $Q$ while the second vector is the sum of the second and fourth.) The residues correspond to two components of the medial link $\L$. 

\begin{figure}
\begin{center}
\includegraphics[height=2 in]{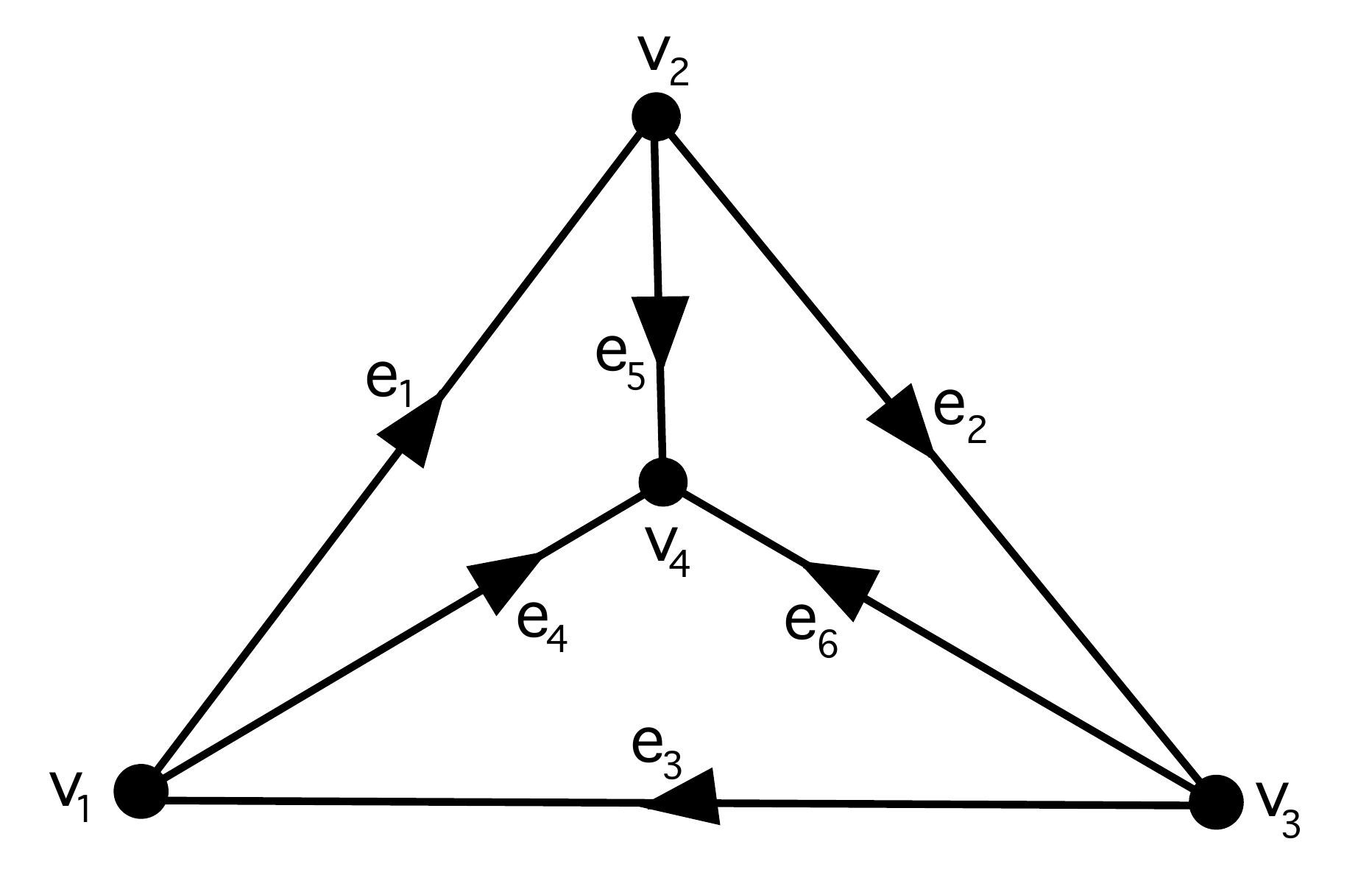}
\caption{Plane Graph $G$ with Vertices and Oriented Edges Labeled}
\label{brings3}
\end{center}
\end{figure}

\end{example} 
\section{Graphs with free $\Z$-symmetry} \label{graphswithfreeZsymmetry} It is well known that when $G$ is a finite plane graph, the number of components of the associated medial graph is equal to the nullity of the Laplacian matrix over $GF(2)$. (See \cite{SW14} for a short proof using Dehn colorings.) In this section we consider graphs $G$ that arise by starting with a finite graph $\overline{G}$ with vertex set embedded in an annulus $A$ (we allow the edges to cross one another), and then lifting to the universal cover of $A$. If $\overline{G}$ is connected and not contained in any disk in $A$, then $G$ will be connected. Such a graph has a cofinite free $\Z$-symmetry; that is, $\Z$ acts freely on $G$ by automorphisms and the quotient graph $\overline{G}$ is finite.

We regard $\Z$ as a multiplicative group with generator $x$. Let $\overline{V} = \{v_1, \ldots, v_n\}$ and $\overline{E}= \{e_1, \ldots, e_m\}$ be the vertex and edge sets of $\overline{G}$. Choose a lift $v_{i, 0}$ of each $v_i\in \overline{V}$ and also a lift $e_{j, 0}$ of each $e_j \in \overline{E}$. We denote the images of $v_{i, 0}$ and $e_{j,0}$ under the group element $x^\nu \in \Z$ by $v_{i, \nu}$ and $e_{j, \nu}$, respectively. Then the vertex and edge sets $V, E$ of $G$ each consists of a finite family of vertices $v_{i, \nu}$ and $e_{j, \nu}$, respectively. 

Let $\F[x^{\pm 1}]$ denote the ring of Laurent polynomials in variable $x$ with coefficients in $\F$. The  space $\C$ of conservative vertex colorings is the dual $\text{Hom}(C, \F)$ of a finitely generated $\F[x^{\pm 1}]$-module $C$ with relation matrix $L=L(x)$. We may take $L$ to be the $n\times n$ matrix $L(x)=D-A(x)$, where $D$ is the diagonal matrix of degrees of $v_i$ and  $A(x)_{i,j}$ is the sum of $x^\nu$ for each edge in $G$ from $v_{i,0}$ to $v_{j,\nu}$.

For $k \ge 0$, let $\De_k(x)$ denote the $k$th elementary divisor of $\C$ (abbreviated by $\De_k$). It is the greatest common divisor of the 
determinants of all $(n-k) \times (n-k)$ minors of the Laplacian matrix $L$. It is well defined up to multiplication by units in $\F[x^{\pm 1}]$. We call $\De_k$ the $k$th \emph{Laplacian polynomial} of $G$. The polynomial $\De_0$ can be regarded as the determinant of the Laplacian matrix of $\overline{G}$ with additional, homological information about algebraic winding numbers of cycles in the annulus $A$ (see \cite{Fo93}, \cite{Ke11}). 

The \emph{degree} of a nonzero Laurent polynomial $f \in \F[x^{\pm 1}]$ is the difference of the maximum and minimum degrees of a nonzero monomial in $f$, denoted by $\text{deg}_\F\ f$.  

A polynomial $f \in \F[x^{\pm 1}]$ is \emph{reciprocal} if $f(x^{-1}) x^n= f(x)$ for some $n\in\Z$. 
Less formally, $f$ is reciprocal if its sequence of coefficients is palindromic. 

\begin{prop} Let $G$ be a graph with cofinite free $\Z$-symmetry.\smallskip

 \ni (1) For any $k \ge 0$, the Laplacian polynomial $\De_k$ is reciprocal. 
\smallskip

\ni (2) $\De_0(G)$ is  divisible by $(x-1)^2.$ \end{prop}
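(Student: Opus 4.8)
The plan is to derive both parts from a single structural symmetry of the equivariant Laplacian, namely $L(x)^T = L(x^{-1})$. First I would establish this identity. Since $L(x) = D - A(x)$ with $D$ a constant (hence symmetric) diagonal matrix, it suffices to check $A(x)^T = A(x^{-1})$. The $(j,i)$ entry of $A(x)$ is $\sum_\nu c^\nu_{j,i}\, x^\nu$, where $c^\nu_{j,i}$ counts the edges of $G$ joining $v_{j,0}$ to $v_{i,\nu}$. Translating each such edge by $x^{-\nu}$ and using that adjacency is symmetric, $c^\nu_{j,i}$ equals the number of edges joining $v_{i,0}$ to $v_{j,-\nu}$; reindexing $\nu \mapsto -\nu$ identifies this with the $(i,j)$ entry of $A(x^{-1})$. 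This gives $A(x)^T = A(x^{-1})$, hence $L(x)^T = L(x^{-1})$, over any field $\F$.

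For part (1) I would argue as follows. Because $\De_k$ is the gcd of the determinants of all $(n-k)\times(n-k)$ submatrices of $L$, and because transposing a submatrix while swapping its row- and column-index sets is a determinant-preserving bijection between the $(n-k)$-submatrices of $L(x)^T$ and those of $L(x)$, the two families of minor-determinants coincide. Their gcds therefore agree up to units, so the gcd formed from $L(x^{-1}) = L(x)^T$ equals $\De_k(x)$ up to a unit. Since $x \mapsto x^{-1}$ is a ring automorphism of $\F[x^{\pm1}]$, that gcd is exactly $\De_k(x^{-1})$, and we conclude $\De_k(x^{-1}) = c\,x^m\,\De_k(x)$, i.e. $\De_k$ is \emph{reciprocal}.

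For part (2), take the representative $p(x) := \det L(x)$ of $\De_0$. Divisibility by $(x-1)$ is immediate: at $x=1$ the matrix $L$ becomes the ordinary Laplacian of $\overline{G}$, whose $i$th row sums to $d_i$ minus the total number of edges at $v_{i,0}$, hence to $0$; so $L(1)\mathbf{1}=0$ and $p(1)=\det L(1)=0$. For the second factor I would upgrade the symmetry of Step 1: because $\det$ is \emph{exactly} transpose-invariant, $p(x^{-1}) = \det L(x^{-1}) = \det L(x)^T = p(x)$ with no unit ambiguity. Writing $p = \sum_k c_k x^k$, this forces palindromic coefficients $c_k = c_{-k}$. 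Then $p'(1) = \sum_k k\,c_k$ is an antisymmetric weighting of symmetric coefficients, and the $k$ and $-k$ terms cancel, giving $p'(1) = 0$. Since $p(1)=p'(1)=0$ is equivalent (over any field, with the formal derivative) to $(x-1)^2 \mid p$, this completes the argument.

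The step I expect to be the main obstacle — or at least the one most easily botched — is forcing $p'(1)=0$ in \emph{every} characteristic. The tempting route, differentiating the functional equation $p(x^{-1})=p(x)$ directly, yields only $2\,p'(1)=0$, which is vacuous when $\mathrm{char}\,\F = 2$ — precisely the case of greatest interest here. The fix is the coefficient-level cancellation $\sum_k k\,c_k = \sum_{k>0} k\,c_k - \sum_{k>0} k\,c_{-k} = 0$, an identity requiring no division by $2$. I would also record two routine checks: that passing from Laurent to ordinary polynomials is harmless, since multiplying by a power of $x$ (a unit coprime to $x-1$) does not affect divisibility by $(x-1)^2$; and that the double-root criterion $p(1)=p'(1)=0 \Leftrightarrow (x-1)^2\mid p$ is valid with the formal derivative over an arbitrary field.
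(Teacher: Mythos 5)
Your argument is correct. Part (1) is the paper's argument with the details filled in: the paper simply records $L(x^{-1})=L(x)^T$ and concludes reciprocity of $\De_k$, exactly as you do (your careful check of $A(x)^T=A(x^{-1})$ and the transposition bijection on $(n-k)\times(n-k)$ minors is left implicit there). Part (2) is where you genuinely diverge. The paper first disposes of the case $\De_0\equiv 0$, gets $\De_0(1)=0$ from the vanishing row sums, and then argues over an algebraic closure: after normalizing so that $\De_0(x)=\De_0(x^{-1})$, the roots come in reciprocal pairs and the degree is even, whence $1$ must be a root of even multiplicity. You instead stay at the level of coefficients: transpose-invariance of $\det$ gives the exact identity $p(x^{-1})=p(x)$ for $p=\det L(x)$, palindromy $c_k=c_{-k}$ forces $p'(1)=\sum_k k\,c_k=0$ by pairing $k$ with $-k$, and the formal-derivative criterion yields $(x-1)^2\mid p$. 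Your route buys uniformity across all characteristics without any appeal to roots in an extension field, and your remark that naively differentiating the functional equation only gives $2p'(1)=0$ --- vacuous in characteristic $2$, the case of most interest in this paper --- identifies precisely the pitfall your coefficient-level cancellation avoids; the paper's root-pairing argument also survives in characteristic $2$ (there $1=-1$, and the parity count still forces even multiplicity at $1$), but it is terser and requires the reader to supply that bookkeeping. One cosmetic remark: since $\De_k$ is only defined up to units of $\F[x^{\pm 1}]$, the gcd comparison in your part (1) a priori yields $\De_k(x^{-1})=c\,x^m\De_k(x)$ with $c$ a unit scalar rather than $c=1$; the paper glosses over the same point, and for $k=0$ your exact identity $p(x^{-1})=p(x)$ removes the ambiguity where it matters.
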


\begin{proof} (1) 
If we choose $L$ as above then $L(x^{-1}) = L(x)^T$. Hence $\De_k(x)$ is reciprocal. \\

(2) If $\De_0$ is identically zero, then there is nothing to prove. Assume that $\De_0$ is nonzero. Setting $x=1$ makes the row sums of $L$ zero. Hence $\De_0(1)=0$. 

By (1), we can normalize so $\De_0(x)=\De_0(x^{-1})$.  Thus $\De_0$ is reciprocal of even degree, and its roots come in reciprocal pairs.  Hence 1 is a double root.
\end{proof}

\begin{remark} \label{alexpoly} If $\overline G$ is embedded in the annulus, then by the medial graph construction, we obtain an alternating link $\ell= \ell_1 \cup \cdots \cup \ell_\mu$ in the thickened annulus, which we may regard as a solid torus $\S^1 \times D^2$. Consider the encircled link $\ell \cup m$, where $m$ is a meridian $ \{p\}\times \partial D^2$, with $p \in \S^1$. It is not difficult to show that the Laplacian polynomial $\De_0$ is equal to 
$(x-1)\De(-1, \ldots, -1, x)$, where $\De$ is the Alexander polynomial of $\ell \cup m$ with $\mu+1$ variables, and $x$ is the variable corresponding to $m$. We will not make use of this fact here.  
\end{remark} 

Now assume that the graph $\overline{G}$ is embedded in the annulus.  Then $G$ will be planar.  Recall that the medial graph $M(G)$ is the projection of an alternating link $\L$. Components of $\L$ are called components of the medial graph.

An \emph{annular cut set} of $\overline{G}$ is a set of vertices such that when the vertices and incident edges are removed, the resulting graph is contained in a disk neighborhood. The \emph{annular connectivity} $\k(\overline{G})$ is the minimal cardinality of an annular cut set of $\overline{G}$.

\begin{theorem} \label{degthm}

Let $G$ be a graph embedded in the plane with cofinite free $\Z$-symmetry. \smallskip

\ni (1) For $\F= GF(2)$, $\text{deg}_\F\,\De_s$ is equal to the number of noncompact components of $M(G)$, where $\De_s$ is the first
nonzero Laplacian polynomial of $G$.\smallskip

\ni (2) For $\F = \Q$,  $\text{deg}_\F\,\De_0= 2\k(\overline{G})$. \end{theorem}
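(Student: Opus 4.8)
The plan is to turn each part into a computation of the degree of a Laurent polynomial over $R=\F[x^{\pm1}]$, a principal ideal domain, and then to read that degree off topologically. Throughout write $C=\text{coker}(L(x)\colon R^n\to R^n)$, so that the Laplacian polynomials are the determinantal divisors of $L(x)$: $\De_k=D_{n-k}$, the $\gcd$ of the $(n-k)\times(n-k)$ minors. For Part (2) I will also use that $\De_0=\det L(x)$ is reciprocal (by the preceding Proposition), so that its span is twice its maximal exponent once it is centered; hence it will suffice to compute that maximal exponent.

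For Part (1) I would begin from the Smith-normal-form identity $\De_s=D_{\,\text{rank}\,L}=\prod_i d_i$, the product of all invariant factors of $L(x)$ (here $\text{rank}\,L=n-s$ over $\F(x)$). Since $\dim_\F R/(d_i)=\deg_\F d_i$, this gives
\[
\deg_\F\De_s=\sum_i\deg_\F d_i=\dim_\F\text{Tor}(C).
\]
It then remains to identify $\dim_\F\text{Tor}(C)$ with the number of noncompact components of $M(G)$. Dualizing, $\C=\text{Hom}_\F(C,\F)$ carries the contragredient $\Z$-action and $\text{Tor}(C)$ is $\F$-dual to the finite-dimensional torsion submodule $\C_{\text{tor}}\subseteq\C$, so the two have equal $\F$-dimension. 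I would then invoke the $\Z$-equivariant isomorphism $\C\cong\M$ from the medial-coloring theorem (over $GF(2)$) and split the component-coloring module $\M$ into $\Z$-orbits. A compact component lies in a free (infinite) orbit and contributes a free summand $\text{Hom}_\F(R,\F)\cong\F^{\Z}$; a noncompact component projects to an essential closed curve in $M(\overline G)$, so some nontrivial power of $x$ stabilizes it and its orbit is finite. The finite orbits together form exactly $\M_{\text{tor}}$, whose $\F$-dimension is the number of vertices lying in finite orbits, i.e.\ the number of noncompact components. This yields $\deg_\F\De_s=\dim_\F\C_{\text{tor}}=\dim_\F\M_{\text{tor}}$, as required. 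The main work here is the bookkeeping of the duality between $\text{Tor}(C)$ and the finite-orbit part of $\C\cong\M$, and the dichotomy ``compact $\Leftrightarrow$ infinite orbit, noncompact $\Leftrightarrow$ finite orbit.''

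For Part (2) I would compute the maximal exponent of $\det L(x)$ over $\Q$ by Cauchy--Binet. Writing $L(x)=Q(x)\,Q(x^{-1})^T$ for the winding-weighted incidence matrix $Q(x)$, Cauchy--Binet gives
\[
\det L(x)=\sum_{|S|=n}p_S(x)\,p_S(x^{-1}),\qquad p_S(x)=\det Q(x)_S,
\]
the sum over $n$-edge subgraphs $S$. A term is nonzero exactly when $S$ is a spanning disjoint union of essential unicyclic subgraphs, and then $p_S(x)=\pm\,x^{a_S}\prod_c(x^{w_c}-1)$ over the cyclic components. The decisive point is that, $\overline G$ being \emph{embedded} in the annulus, every graph cycle is an embedded circle and hence has winding $w_c\in\{0,\pm1\}$; as only essential cycles survive, each factor yields $(x^{w_c}-1)(x^{-w_c}-1)=2-x-x^{-1}$, so that
\[
p_S(x)\,p_S(x^{-1})=(2-x-x^{-1})^{k_S},\qquad \det L(x)=\sum_k N_k\,(2-x-x^{-1})^k,
\]
where $N_k\ge0$ counts spanning subgraphs with exactly $k$ essential cycles. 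Because $(2-x-x^{-1})^k$ has maximal exponent $k$ with coefficient $(-1)^k$ and the $N_k$ are nonnegative, there is no cancellation at the top: the maximal exponent of $\det L(x)$ is $\max\{k:N_k>0\}$, namely the largest number $\nu^*$ of vertex-disjoint essential cycles in $\overline G$ (any such family extends to an admissible $S$ by growing a spanning forest, so $N_{\nu^*}>0$). A planar (annular) Menger theorem then identifies $\nu^*$ with the minimum number of vertices meeting every essential cycle, i.e.\ $\nu^*=\k(\overline G)$; with reciprocity this gives $\deg_\Q\De_0=2\k(\overline G)$.

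I expect the crux of Part (2) to be the collapse of every nonzero Cauchy--Binet term to a pure power of $2-x-x^{-1}$: this rests entirely on embeddedness forcing unit windings, and it is exactly what rules out the higher-winding configurations that would otherwise inflate the degree past $\k(\overline G)$. The remaining obstacle is the min--max equality $\nu^*=\k(\overline G)$, for which I would cite or adapt a Menger-type theorem for annular graphs rather than reprove it; in Part (1) the comparable obstacle is verifying the orbit dichotomy and the duality accounting, both of which are routine once the equivariant isomorphism $\C\cong\M$ is in hand.
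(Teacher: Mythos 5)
Your proposal is correct in outline and, for part (1), is essentially the paper's own argument: the paper likewise writes $C\cong\text{Tor}(C)\oplus\F[x^{\pm1}]^s$ with the free summands coming from $\Z$-orbits of closed medial components, and reads $\deg_\F\De_s=\dim_\F\text{Tor}(C)$ as the number of noncompact components. One bookkeeping caution: $\text{Hom}_\F(R,\F)\cong\F^{\Z}$ is \emph{not} torsion-free as an $R=\F[x^{\pm1}]$-module (the constant sequence is killed by $x-1$), so ``the finite-dimensional torsion submodule $\C_{\text{tor}}\subseteq\C$'' is not well-posed as stated; it is cleaner to identify $\text{Tor}(C)$ inside $C$ itself by observing that $C$ is the free $\F$-module on the medial components, isomorphic to $R^s\oplus\bigoplus_j R/(x^{w_j}-1)$ over the orbits, whence $\dim_\F\text{Tor}(C)=\sum|w_j|$ is the count of noncompact components. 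For part (2) your route differs from the paper's in where the labor sits, in a way worth noting. The paper simply cites the Forman--Kenyon identity $\De_0(x)=\sum_k C_k(2-x-x^{-1})^k$ (with $C_k$ counting CRSFs with $k$ components and all cycles essential), whereas you rederive it from Cauchy--Binet applied to $L(x)=Q(x)Q(x^{-1})^T$; your derivation is sound (nonzero terms force each component of $S$ to be unicyclic, tree parts contract away, and embeddedness forces cycle windings in $\{0,\pm1\}$), and it has the virtue of making explicit exactly where planarity enters --- the collapse of $(x^{w}-1)(x^{-w}-1)$ to $2-x-x^{-1}$ fails for the nonplanar circulant examples, which is the content of the paper's remark that the embedding hypothesis cannot be dropped. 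Conversely, the step you outsource to ``a Menger-type theorem for annular graphs'' is the one step the paper actually proves: it takes a maximal nested family $\g_1,\ldots,\g_n$ of disjoint essential cycles, uses maximality to thread a radial path through the complement of $\overline G$ that meets the graph in one vertex $v_i$ per cycle, and checks that $\{v_1,\ldots,v_n\}$ is an annular cut set while any annular cut set must meet every $\g_i$. If you cite instead, be careful that the cited statement is the vertex-disjoint version and that you justify the equivalence between ``removing the vertices leaves the graph in a disk'' and ``the vertex set meets every essential cycle''; supplying the paper's short nesting argument is probably less work than locating a citation in exactly the right form.
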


\begin{remark} Example \ref{circulant} below shows that the hypothesis that $G$ is a plane graph cannot be eliminated in part (2).
\end{remark}

\begin{proof} (1) Recall that the components of the medial graph $M(G)$ correspond to a basis for the vector space $\C$ over $\F=GF(2)$. If $M(G)$ has closed components, then each $\Z$-orbit of these spans a free $\F[x^{\pm 1}]$-summand of the module $C$. The module $C$ decomposes as $\text{Tor}(C) \oplus \F[x^{\pm 1}]^s$, where $\text{Tor}(C)$ is the $\La$-torsion submodule of $C$, and $s$ is the number of $\Z$-orbits of closed components of $M(G)$.  The first non-vanishing Laplacian polynomial is $\De_s(x)$. Its degree, which is the dimension of $\text{Tor}(C)$ regarded as a vector space over $\F$, is the number of non-compact components of $M(G)$.  

(2) A combinatorial expression for $\De_0$ was given by R. Forman \cite{Fo93}. Another proof was later given by R. Kenyon \cite{Ke11} \cite{Ke12}, and we use his terminology. 
\begin{equation*}\label{forman} \De_0 (x) = \sum_{k=1}^\infty C_k(2-x-x^{-1})^k. \end{equation*}
Here $C_k$ is the number of cycle rooted spanning forests in the graph $\overline{G}$ having $k$ components and every cycle essential (that is, non-contractible in the annulus). A \emph{cycle rooted spanning forest} (CRSF) is a subset of edges of $\overline{G}$ such that (i) every vertex is an endpoint of some edge; and (ii) each component has exactly one cycle. 

It follows that the degree of $\De_0$ is twice the maximum cardinality $N$ of a set of pairwise disjoint essential cycles in $A$. We complete the proof by showing that $N$ is the annular connectivity $\k(\overline{G})$. 

Regard the annulus $A$ as lying in the plane, having inner and out boundary components.  Let $\g_1, \ldots, \g_n$ be a set of pairwise disjoint essential cycles of maximal cardinality, ordered so that $\g_i$ lies inside $\g_{i+1}$.  There is a path with interior in the complement of $\overline{G}$ from the inner boundary to some vertex $v_1$ of $\g_1$, since otherwise the obstructing edges would form an essential cycle disjoint from $\g_1, \ldots, \g_n$. Inductively, there is a path from $v_{i}$ on $\g_{i}$ to a vertex $v_{i+1}$ on $\g_{i+1}$, and from $v_n$ to the outer boundary, with interior in the complement of $\overline{G}$.  Then $\{v_{1,0}, \ldots, v_{n,0}\}$ is a vertex cut set for the graph $G$. No vertex cut set can have smaller cardinality, since its projection must have  a vertex on each of the cycles $\g_1, \ldots, \g_n$. 
\end{proof}

If the annular connectivity $\k(\overline{G})$ is equal to $1$ then $\overline{G}$ can be split at a vertex $v$, producing a graph $H$ with vertices $v', v''$ so that $G$ is an infinite join of copies $(H_\nu, v'_\nu, v''_\nu)$:
$$\cdots *H_\nu * H_{\nu+1}* \cdots,$$
 where $H_\nu$ is joined to $H_{\nu+1}$ along $v'_\nu \in H_\nu$ and $v''_{\nu+1} \in H_{\nu+1}$.

\begin{cor} If $\k(\overline{G})=1$, then the Laplacian polynomial $\De_0(x)$ of $G$ is equal to $\t(H)(x-1)^2$, where $\t(H)$ is the number of spanning trees in $H$.
\end{cor}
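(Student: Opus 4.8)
The plan is to build directly on the combinatorial formula for $\De_0$ used in the proof of Theorem \ref{degthm}(2), namely $\De_0(x) = \sum_{k\ge 1} C_k\,(2-x-x^{-1})^k$, where $C_k$ counts cycle rooted spanning forests of $\overline G$ with $k$ components all of whose cycles are essential. First I would observe that since $\k(\overline G)=1$, the maximum number of pairwise disjoint essential cycles in $A$ is $1$; this is exactly the identification $N=\k(\overline G)$ established in that proof. The $k$ cycles of a CRSF counted by $C_k$ lie in distinct components and hence are vertex-disjoint, so $C_k=0$ for every $k\ge 2$ and the sum collapses to $\De_0(x)=C_1\,(2-x-x^{-1})$. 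Since $2-x-x^{-1}=-x^{-1}(x-1)^2$ differs from $(x-1)^2$ only by the unit $-x^{-1}$ of $\Q[x^{\pm 1}]$, it then remains only to identify the single coefficient $C_1$ with $\t(H)$.

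The heart of the argument is a bijection between the subgraphs counted by $C_1$ --- connected spanning subgraphs of $\overline G$ containing exactly one cycle, that cycle being essential --- and the spanning trees of $H$. Recall that $H$ is obtained by splitting $\overline G$ at the cut vertex $v$ into two vertices $v',v''$, so the edge sets of $\overline G$ and $H$ are canonically identified while $H$ has one more vertex than $\overline G$. A connected unicyclic spanning subgraph of $\overline G$ has $n$ edges on $n$ vertices, and a spanning tree of $H$ has $n$ edges on $n+1$ vertices, so the two cardinalities match; I would show that the edge-set identification carries one family onto the other. Given a spanning tree $T$ of $H$, re-gluing $v'$ to $v''$ turns the unique $v'$-to-$v''$ path of $T$ into a cycle through $v$, yielding a connected unicyclic spanning subgraph of $\overline G$. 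Conversely, a subgraph counted by $C_1$ has a simple essential cycle which, since $\overline G-v$ lies in a disk, must pass through $v$ exactly once; cutting at $v$ then opens this cycle into a path and produces a spanning tree of $H$.

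The step I expect to be the main obstacle is verifying that \emph{essentiality} of the cycle corresponds exactly to the lifted subgraph being acyclic (hence a tree). This is where the planar/annular data is needed rather than pure combinatorics: the split at $v$ is determined by a cut arc $\d$ from the inner to the outer boundary of the annulus $A$ through $v$, and the two edges of the cycle incident to $v$ lie on opposite sides of $\d$ if and only if the closed-up cycle has nonzero winding number, i.e.\ is essential. Phrasing this cleanly --- that a $v'$-to-$v''$ path in $H\subset A\setminus\d$ (a disk) always closes up to a cycle of winding number $\pm 1$, while a contractible cycle would keep both incident edges at $v$ on the same side of $\d$ and thus lift to a genuine cycle in $H$ --- is the one place where I would argue with the embedding. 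Once this equivalence is established the bijection gives $C_1=\t(H)$, and therefore $\De_0(x)=\t(H)(2-x-x^{-1})$, which equals $\t(H)(x-1)^2$ up to the unit $-x^{-1}$.
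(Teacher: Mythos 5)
Your proposal is correct and follows essentially the same route as the paper: specialize the Forman--Kenyon expansion $\De_0(x)=\sum_k C_k(2-x-x^{-1})^k$, note that $\k(\overline G)=1$ forces $C_k=0$ for $k\ge 2$ since the cycles of a CRSF are pairwise disjoint essential cycles, and identify $C_1$ with $\t(H)$ via the split/reglue bijection at the cut vertex $v$. The paper states this bijection without elaboration; your discussion of why essentiality of the cycle corresponds to the split subgraph being a tree is a correct filling-in of that step, not a departure from it.
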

\begin{proof} Note that up to unit multiplication in $\Q[t^{\pm 1}]$, the term $(x-1)^2$ is equal to 
$(2-x-x^{-1})$. In the proof of Theorem \ref{degthm}, $C_k =0$ unless $k=1$. Every CRSF with an essential cycle becomes a spanning tree for $H$ when $\overline{G}$ is split along $v$ to produce $H$. Conversely, every spanning tree for $H$ becomes a CRSF with essential cycle when $v'$ and $v''$ are rejoined. Hence there is a bijection between the set of CRSFs with an essential cycle and the set of spanning trees for $H$.  
\end{proof}

\begin{figure}[ht]
\begin{center}
\begin{minipage}[b]{0.3\linewidth}
\centering
\includegraphics[width=\textwidth]{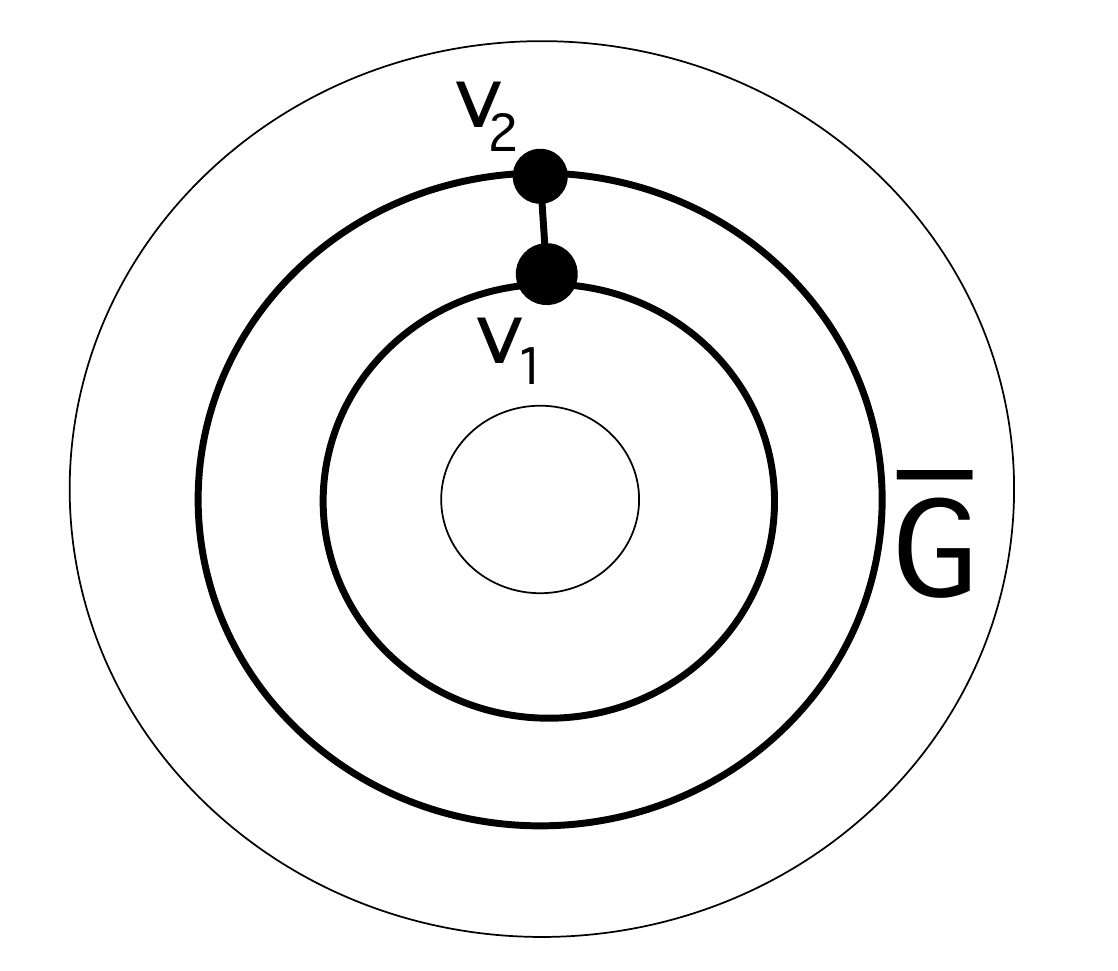}
\caption{Graph $\overline{G}$}
\label{ladderquot}
\end{minipage}
\hspace{0.4cm}
\begin{minipage}[b]{0.50\linewidth}
\centering
\includegraphics[width=\textwidth]{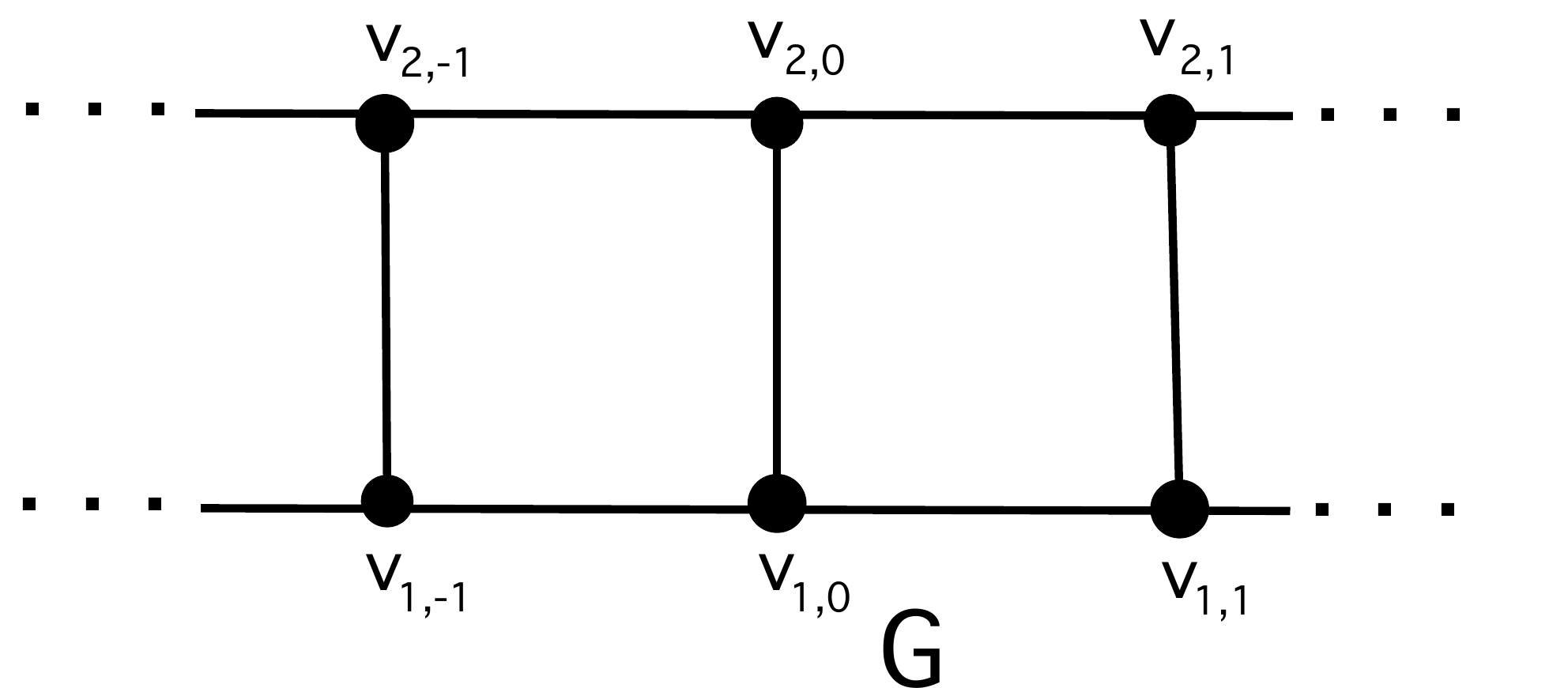}
\caption{Graph $G$}
\label{ladderquotient}
\end{minipage}
\end{center}
\end{figure}

\begin{example} \label{ladder}  Consider the graph $\overline{G}$  embedded in the annulus as in Figure \ref{ladderquot}. It lifts to the ``ladder graph" $G$, which appears in Figure
\ref{ladderquotient}. The Laplacian matrix is 
\begin{equation*} L= \begin{pmatrix} 3-x-x^{-1} & -1\\ -1 & 3 - x - x^{-1} \end{pmatrix} \end{equation*} 
The $0$th Laplacian polynomial $\De_0(x)$ is $(x-1)^2 (x^2-4 x+1)$, with degree 4 over  $\F=\Q$ or $\F=GF(2)$. The reader can check that $\{v_1, v_2\}$ is an annular cut set of $\overline{G}$ with minimal cardinality, and $M(G)$ has four components. The bicycle space $\B$ has dimension $3$ for any field $\F$.

\end{example}

\begin{example} Consider the ``girder graph" $G$ in Figure \ref{girder}. The Laplacian matrix is 
\begin{equation*} L= \begin{pmatrix} 6-2x-2x^{-1} & -1-x^{-1}\\ -1-x & 6 - 2x - 2x^{-1} \end{pmatrix}. \end{equation*}
With $\F=\Q$ the $0$th Laplacian  polynomial is $\De_0(x) = (x-1)^2 (4x^2-17x+4)$.
Again one can check that the quotient graph has an annular cut set of cardinality 2 but 
no cut set of smaller size.  

If $\F=GF(2)$, then $\De_0(x) = (x-1)^2$. It is an amusing exercise to verify that $M(G)$ has exactly two components. 

The bicycle space $\B$ has dimension $3$ whenever the characteristic of $\F$ is different from $2$. When the characteristic is $2$, $\B$ is $1$-dimensional. 

\begin{figure}
\begin{center}
\includegraphics[height=1.4 in]{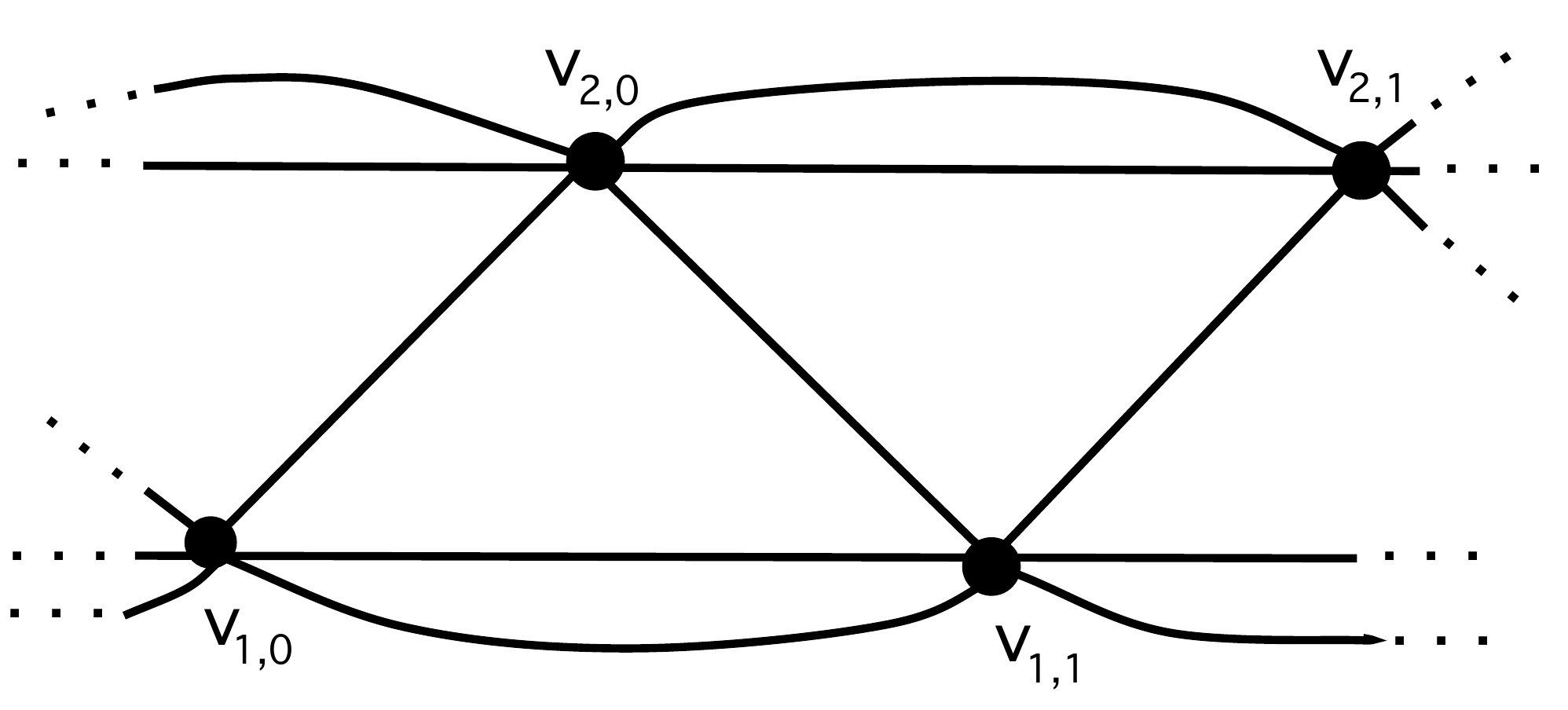}
\caption{Girder Graph $G$}
\label{girder}
\end{center}
\end{figure}

\end{example}


\section{Graphs with free $\Z^2$-symmetry}\label{graphswithfreeZ2symmetry}

 A finite graph $\overline{G}$ with vertex set embedded in the torus lifts to a graph $G$ with a $\Z^2$-symmetry. We regard $\Z^2$ as a multiplicative group with generators $x, y$ corresponding to a fixed meridian and longitude of the torus. Each vertex $v$ of $G$ is covered by a countable collection $v_{(i, j)}$ of vertices such that the action of $x^m y^n$ sends 
each $v_{(i, j)}$ to $v_{(i+m, j+n)}$, for $m, n \in \Z$.  We will assume that $G$ is connected.

As in the case of graphs with free $\Z$-symmetries, the space $\C$ of conservative vertex colorings of $G$ is the dual $\text{Hom}(C, \F)$ of a finitely generated module $C$, but here the ring is $\F[x^{\pm 1}, y^{\pm 1}]$. The Laplacian matrix $L$ is a relation matrix for $C$, and there is a sequence $\De_k(x, y)$ of  $k$th elementary divisors (abbreviated by $\De_k$), well defined up to multiplication by units. Again, we call $\De_k$ the $k$th Laplacian polynomial of $G$.

\begin{example}\label{2grid} Consider the simplest graph $\overline{G}$, having a single vertex $v$, embedded in the torus such that each face is contractible, as in Figure \ref{gridquotient}. It lifts to a plane graph $G$ with vertices $v_{(i, j)}$, as in Figure \ref{grid}.
The Laplacian matrix is a $1 \times 1$-matrix: 
\begin{equation*} L= (4-x-x^{-1}-y-y^{-1}) \end{equation*} 
and so the $0$th Laplacian polynomial is $\De_0(x, y) = 4-x-x^{-1}-y-y^{-1}$. 
The bicycle space is infinite-dimensional for any field $\F$.

\begin{figure}[ht]
\begin{center}
\begin{minipage}[b]{0.40\linewidth}
\centering
\includegraphics[width=\textwidth]{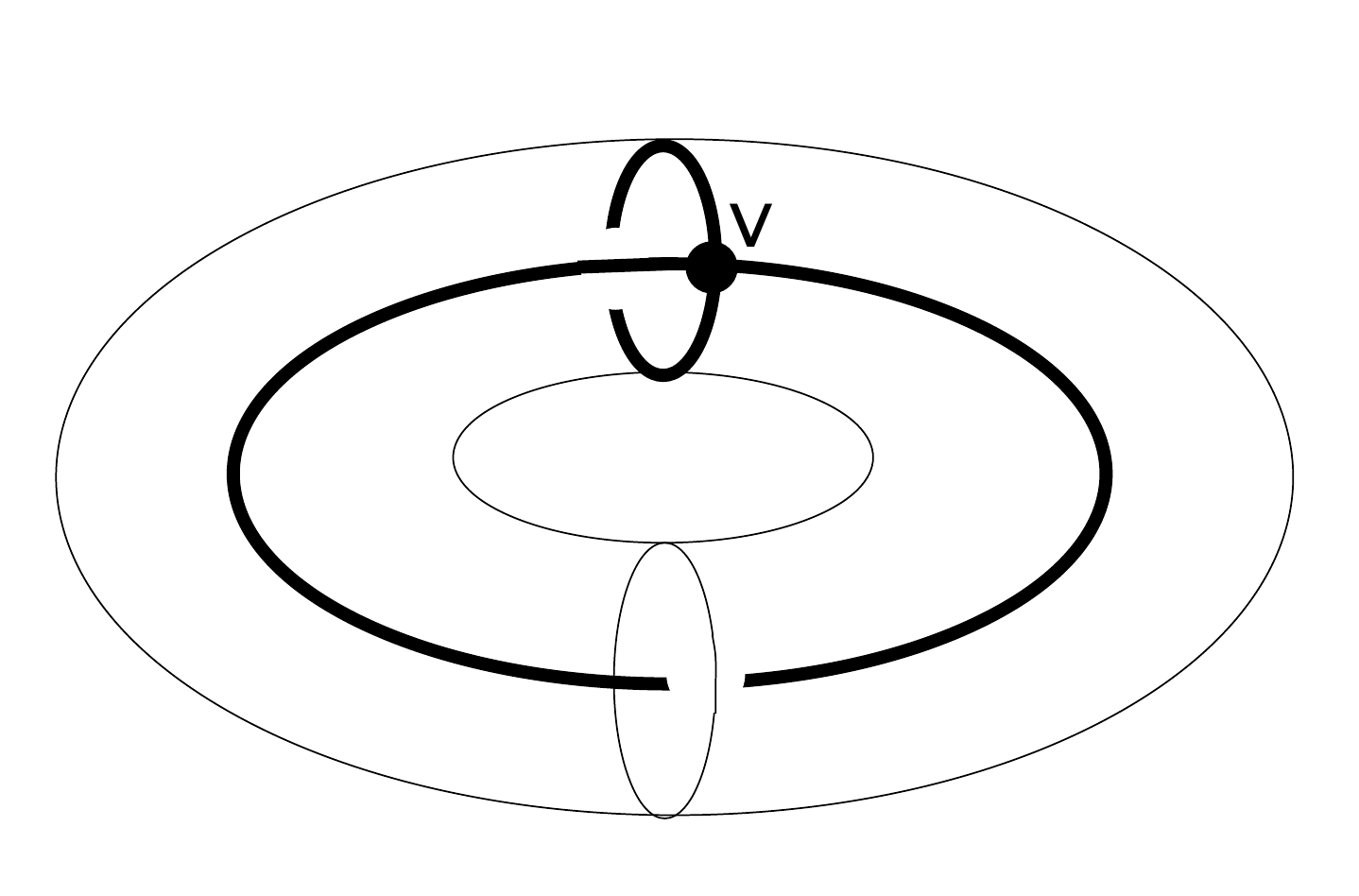}
\caption{Graph $\overline{G}$ in the Torus}
\label{gridquotient}
\end{minipage}
\hspace{0.5cm}
\begin{minipage}[b]{0.4\linewidth}
\centering
\includegraphics[width=\textwidth]{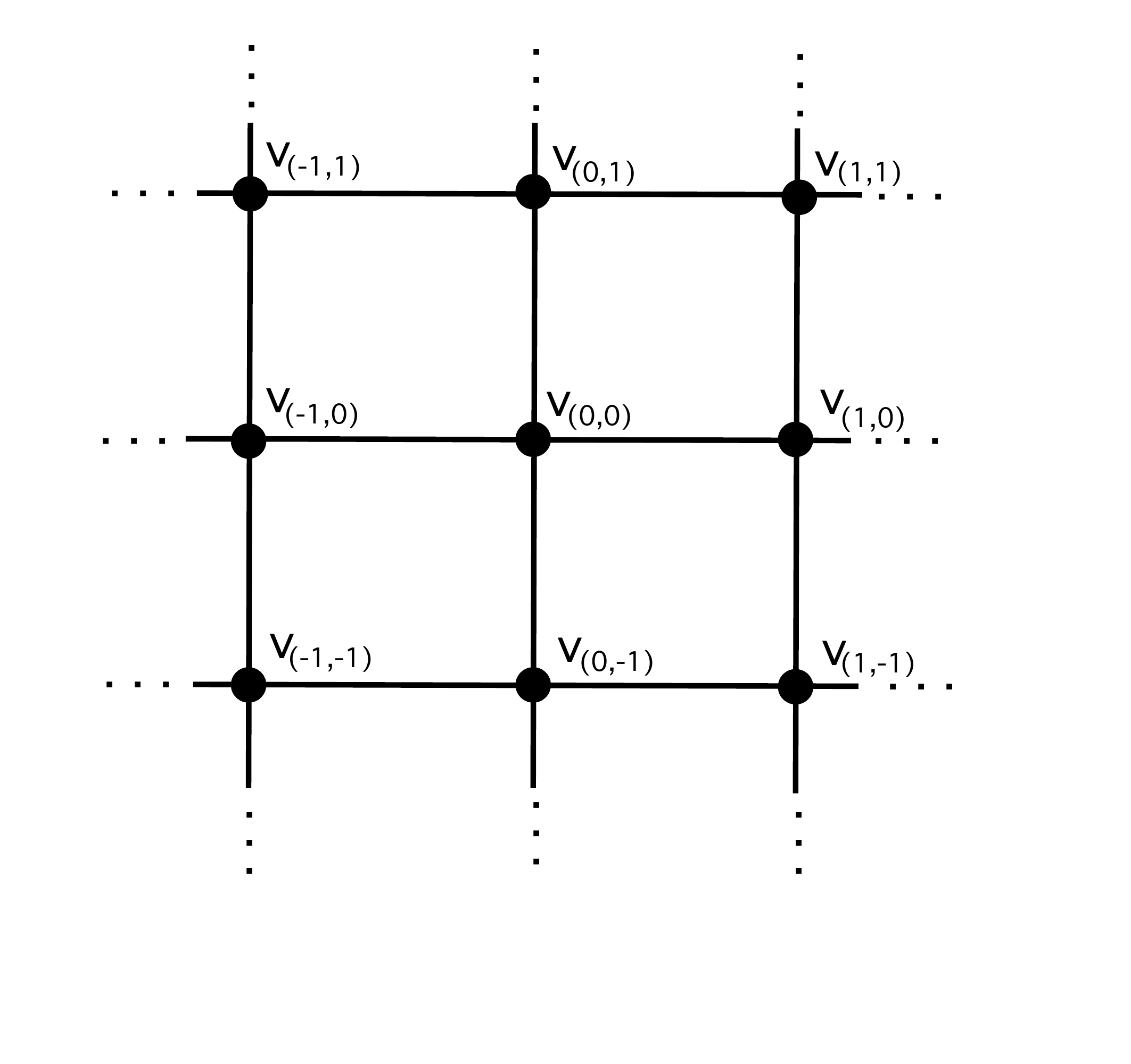}
\caption{Grid Graph $G$}
\label{grid}
\end{minipage}
\end{center}
\end{figure}\end{example}

\begin{example} \label{mitsu}  Consider the Mitsubishi (three diamond) graph $G$ with vertices labeled as in Figure \ref{mitsu}. The Laplacian matrix is 
\begin{equation*} L= \begin{pmatrix} 6 & -1-x^{-1}-y^{-1} & -1 - y^{-1}- x y^{-1}\\ -1-x-y & 3 & 0 \\ -1 - y -x^{-1} y & 0 & 3 \end{pmatrix}. \end{equation*}
The Laplacian polynomial is $\De_0(x, y) = 6(6 - x - x^{-1} - y - y^{-1} - x y^{-1} - x^{-1} y)$. The polynomial vanishes modulo $2$ since the medial graph $M(G)$ has closed components.

\begin{figure}
\begin{center}
\includegraphics[height=3.1 in]{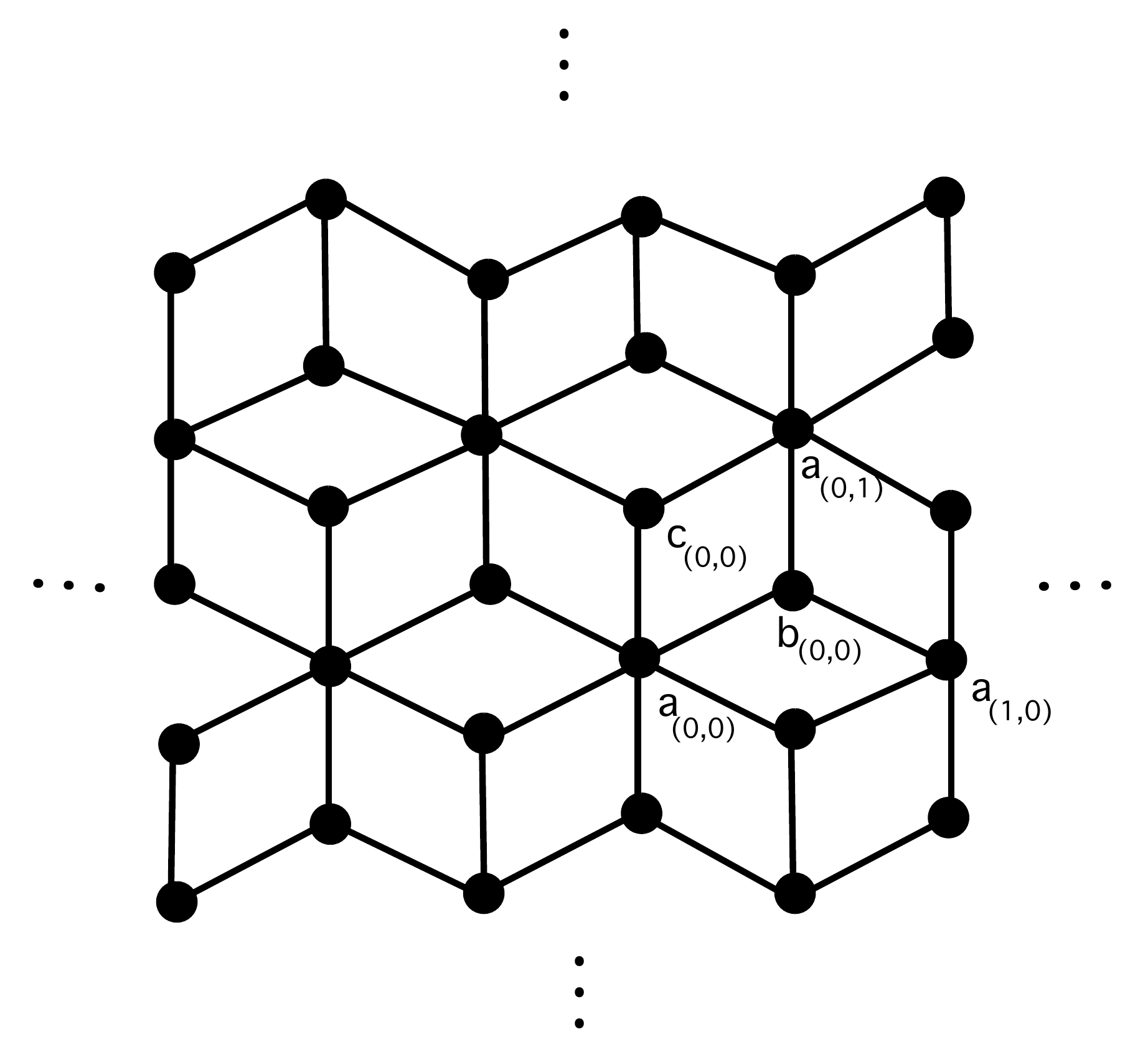}
\caption{Mitsubishi Graph $G$}
\label{mitsu}
\end{center}
\end{figure}
\end{example}



\section{Mahler measure and spanning trees}  \label{mahler} We review some basic notions of algebraic dynamics as applied to locally finite graphs. A general treatment can be found in  \cite{EW99} or \cite{Sc95}.

Assume that $G$ is a graph, not necessarily planar, that admits a cofinite free $\Z^d$-symmetry. We assume that the vertices and edges of $G$ consist of the orbits of finitely many vertices $v_1, \ldots, v_n$ and edges $e_1, \ldots, e_m$, respectively. For any $x_1^{s_1}\cdots x_d^{s_d} \in \Z^d$, we denote  the vertex $(x_1^{s_1}\cdots x_d^{s_d}) v_i$ by $v_{i, \s}$, where $\s= (s_1, \ldots, s_d)$. We use similar notation for edges. 

Let $\overline{G}$ be the quotient graph of $G$. By abuse of notation, we denote its vertices by $v_1, \ldots, v_n$ and edges by $e_1, \ldots, e_m$. 
We regard $G$ as a covering graph of $\overline{G}$. If $\La \subset \Z^d$ is any subgroup, then we denote by $G_\La$  the intermediate covering graph of $\overline{G}$.  If $\La$ has finite index $r$, then $G_\La$ is a finite, $r$-sheeted covering graph. If $G$ is connected, then $G_\La$ is connected for every subgroup $\La$. 

The vector space $\C$ of conservative vertex colorings is the dual space $\text{Hom}(C, \F)$, where $C$ is the finitely generated module with presentation matrix $L$. We regarded $L$ as a matrix over $\F[x_1^{\pm 1}, \ldots, x_d^{\pm 1}]$. However, we can also regard it over the ring $\Rd=\Z[x_1^{\pm 1}, \ldots, x_d^{\pm 1}]$
of Laurent polynomials in $d$ variables with integer coefficients.
By our assumptions, $C$ is a finitely generated $\Rd$-module. 

We replace the field $\F$ with the additive circle group $\T= \R/\Z$. Then 
$\text{Hom}(C, \T)$ is the \emph{Pontryagin dual} $\hat C$ of $C$. A homomorphism is a function $\rho$ that assigns a ``color" $\a_{i, \s}\in \T$ to each vertex $v_{i, \s}$ in such a way that, when extended linearly, all $\Z^d$-multiples of row vectors of $L$ are mapped to zero. Clearly, $\hat C$ is an abelian group under coordinate-wise addition. 

We endow $C$ with the discrete topology, and the space of homomorphisms with the compact-open topology. Then $\hat C$ is a compact, 0-dimensional topological group. Moreover, the module actions of $x_1, \ldots, x_d$ determine commuting homeomorphisms $\si_1, \ldots, \si_d$ of $\hat C$. Explicitly, if $\rho$ assigns $\a_{i, \s}$ to $v_{i, \s}$, then $\si_j \rho$ assigns $\a_{i, \s'}$, where $\s'$ is obtained by adding 1 to the $j$th component of $\s$. Consequently, $\hat C$ has a $\Z^d$-action $\si: \Z^d \to \text{Aut}(\hat C)$. We denote $\si(\s)$ by $\si_\s$.

\begin{definition} Let $\La \subset \Z^d$ be a subgroup. A $\La$-\emph{periodic point} of $\hat C$ is homomorphism $\rho$ such that $\si_\s \rho = \rho$, for any $\s \in \La$.
\end{definition} 

The set of all $\La$-periodic points is a subgroup of $\hat C$, denoted here by $\text{Per}_\La(\si)$. 

\begin{definition} Let $G$ be a finite graph with $r$ connected components
$G_1, \ldots, G_r$. The \emph{complexity} $T(G)$ is the product $\t_1 \cdots \t_r$, where $\t_i$ is the number of spanning trees of $G_i$. 

\end{definition} 

\begin{remark} There are many ways to define complexity of a graph. The quantity $T(G)$ is the number of spanning forests of $G$ having minimal number of component trees. \end{remark} 

\begin{prop} \label{counting} Let $\La \subset \Z^d$ be a subgroup. Then $\text{Per}_\La(\si)$ is isomorphic to the group of conservative vertex colorings of $G_\La$. If $\La$ has finite index, then $\text{Per}_\La(\si)$ consists of $T(G_\La)$ tori, each having dimension equal to the number of connected components of the quotient graph $G_\La$.
\end{prop}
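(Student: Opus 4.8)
The plan is to translate the dynamical statement into the algebra of the Laplacian module and then reduce part (2) to the matrix-tree theorem. First I would prove the first assertion directly from the definitions. A $\La$-periodic point $\rho\in\hat C$ assigns a color $\a_{i,\s}\in\T$ to each vertex $v_{i,\s}$ of $G$, subject to the Laplacian conditions recorded by the rows of $L$, and the requirement $\si_\s\rho=\rho$ for all $\s\in\La$ says precisely that $\a_{i,\s}=\a_{i,\s'}$ whenever $\s\equiv\s'\pmod\La$. Hence a $\La$-periodic point is exactly a $\T$-valued coloring of $G$ that is constant on $\La$-orbits of vertices, equivalently a $\T$-valued coloring of the quotient graph $G_\La$; moreover the Laplacian vertex condition on $G$ descends to, and is equivalent to, that on $G_\La$. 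Dually this reads $\text{Per}_\La(\si)=\text{Hom}(C_\La,\T)$, where $C_\La=C\otimes_{\Rd}\Z[\Z^d/\La]$ is the reduction of $C$ modulo $\La$. By the covering-graph construction, reducing the entries of $L$ modulo $\La$ yields, as an integer matrix on $\Z^{V(G_\La)}$, exactly the symmetric Laplacian $L_\La$ of $G_\La$, so $C_\La=\text{coker}(L_\La)$; as in the proposition identifying $\C$ with $\ker L$, $\text{Hom}(C_\La,\T)$ is then the group of conservative vertex colorings of $G_\La$ over $\T$. This is part (1).

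For part (2) I would assume $[\Z^d:\La]=r<\infty$, so that $G_\La$ is finite and $C_\La=\text{coker}(L_\La)$ is a finitely generated abelian group, say $C_\La\cong\Z^c\oplus\text{Tor}(C_\La)$. Pontryagin duality gives $\text{Per}_\La(\si)=\widehat{C_\La}\cong\T^c\times\text{Tor}(C_\La)$, a compact group with identity component $\T^c$ and component group $\text{Tor}(C_\La)$; thus it consists of $|\text{Tor}(C_\La)|$ tori, each of dimension $c$. The free rank $c$ is the corank of the integer matrix $L_\La$, and since $L_\La$ is the Laplacian of the finite graph $G_\La$ its rank is $|V(G_\La)|$ minus the number of connected components; hence $c$ equals the number of components of $G_\La$, as claimed.

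The remaining step, $|\text{Tor}(C_\La)|=T(G_\La)$, is where I expect the real content to lie. For a connected graph the torsion of $\text{coker}(L_\La)$ is the critical (sandpile) group, whose order is the number of spanning trees by Kirchhoff's matrix-tree theorem; I would compute $|\text{Tor}(C_\La)|$ as the product of the nonzero invariant factors of $L_\La$ (equivalently, the normalized product of its nonzero eigenvalues) and match it against this count. The point demanding care is the multi-component case: the correct enumerator is the number of spanning forests with exactly one tree in each component, which is the complexity $T(G_\La)$, reflecting the block structure of $L_\La$ rather than a spanning-tree count of the whole graph. With this identification in hand, $\text{Per}_\La(\si)$ consists of $T(G_\La)$ tori, each of dimension equal to the number of connected components of $G_\La$, completing the proof.
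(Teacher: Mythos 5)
Your proposal is correct and follows essentially the same route as the paper: identify $\text{Per}_\La(\si)$ with the dual of the cokernel of the Laplacian of $G_\La$, use Pontryagin duality to read off the free rank (equal to the number of components) as the torus dimension, and apply the matrix-tree theorem blockwise to identify the component group with a group of order $T(G_\La)=\prod_i \t_i$. The only cosmetic difference is that you treat $\text{coker}(L_\La)$ globally as $\Z^c\oplus\text{Tor}$ while the paper decomposes component-by-component into summands $A\oplus\Z$ with $|A|=\t_i$ before dualizing.
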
 

\begin{proof} 
There is a natural isomorphism, which is also a homeomorphism, between $\text{Per}_\La(\si)$ and the group of conservative vertex colorings of $G_\La$. The latter is a subspace of $\T^{V_\La}$, where $V_\La$ is the vertex set of $G_\La$. It can be computed from the Laplacian matrix of $G_\La$, a diagonal block matrix in which each block is the Laplacian matrix of a  component of $G_\La$. Then $\text{Per}_\La(\si)$ is the Cartesian product of the corresponding spaces. It suffices to show that the space corresponding to the $ith$ component of $G_\La$ consists of $\t_i$ pairwise disjoint circles, where $\t_i$ is the number of spanning trees of the component.

Consider the $i$th component and corresponding block in the Laplacian matrix. The block has corank  1. By Kirchhoff's matrix-tree theorem (see, for example, Chapter 13 of \cite{GR01}), the absolute value of the determinant of any submatrix obtained by deleting a row and column is equal to $\t_i$. The block is a presentation matrix for a finitely generated abelian group of the form $A \oplus \Z$, where $|A| = \t_i$. The dual group is isomorphic to $A \times \T$. Topologically it consists of $\t_i$ pairwise disjoint circles.  \end{proof}

The \emph{topological entropy} $h(\si)$ of our $\Z^d$-action $\si$  is a measure of complexity. The general definition can be found in \cite{EW99} or \cite{Sc95}. By a fundamental result of D. Lind, K. Schmidt and T. Ward, \cite{LSW90} \cite{Sc95}, it is equal to the exponential growth rate of $|\text{Per}_\La(\si)|$, the  number of components of  $\text{Per}_\La(\si)$, using a suitable sequence of subgroups $\La$:

\begin{equation*}\label{growth} h(\si) =  \limsup_{\langle \La \rangle \to \infty} \frac{1}{|\Z^d/\La|}
\log |\text{Per}_\La(\si)|. \end{equation*}

Here $\langle \La \rangle$ is the minimum length of a nonzero element of $\La$. Heuristically, the condition that $\langle \La \rangle$ tends to infinity ensures that the sublattice $\La$ of $\Z^d$ grows in all directions as we take a limit. 

There is a second way to compute $h(\si)$, which uses Mahler measure.

\begin{definition} \label{mahler} The \emph{logarithmic Mahler measure} of a nonzero polynomial 
$f(x_1, \ldots, x_d) \in \Rd$ is 
\begin{equation*} m(f) = \int_0^1 \ldots \int_0^1 \log|f(e^{2\pi i \theta_1}, \ldots, e^{2\pi i \theta_d})| d\theta_1 \cdots d\theta_d. \end{equation*}

\end{definition} 

\begin{remark} (1) The integral in Defintion \ref{mahler} can be singular, but nevertheless it converges. \smallskip

(2) When $d=1$, Jensen's formula shows that $m(f)$ can be described another way. If 
$f(x) = c_s x^s+ \cdots c_1 x + c_0$, $c_0c_s \ne 0$, then
\begin{equation*} m(f) = \log|c_s| + \sum_{i=1}^s \log |\lambda_i|, \end{equation*}
where $\lambda_1, \ldots, \lambda_s$ are the roots of $f$.  \smallskip

(3) If $f, g \in \Rd$, then $m(fg) = m(f) + m(g)$. Moreover, $m(f) =0$ if and only if $f$ is the product of 1-variable cyclotomic polynomials, each evaluated at a monomial of $\Rd$
(see \cite{Sc95}). 

\end{remark}

By \cite{Sc95} (see Example 18.7), $h(\si)$ is equal to the logarithmic Mahler measure $m(\De_0)$ of the $0$th Laplacian polynomial of $G$.  By Proposition \ref{counting}, we have: 

\begin{theorem} \label{trees} Let $G$ be  graph with cofinite free $\Z^d$-symmetry. Then 
$$m(\De_0)= \limsup_{\langle \La \rangle \to \infty} \frac{1}{|\Z^d/\La|}
\log T(G_\La), $$
where $T(G_\La)$ is the
complexity of the covering graph $G_\La$. When $d=1$, the limit superior can be replaced by an ordinary limit. 

\end{theorem}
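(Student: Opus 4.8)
The plan is to obtain the main equality by assembling the three results already in place, and to reserve genuine work for the $d=1$ refinement. For a finite-index subgroup $\La$, Proposition \ref{counting} identifies $\text{Per}_\La(\si)$ with a disjoint union of $T(G_\La)$ tori; since each torus is connected, the number of connected components of $\text{Per}_\La(\si)$ is exactly the complexity $T(G_\La)$. Substituting this into the Lind--Schmidt--Ward entropy formula of \cite{LSW90}, \cite{Sc95} (whose left-hand count is precisely the number of components, the relevant form when the action is non-expansive, as it is here because $\De_0$ vanishes on the unit circle) gives
\[
h(\si) = \limsup_{\langle \La \rangle \to \infty} \frac{1}{|\Z^d/\La|} \log T(G_\La).
\]
Combining this with the identity $h(\si) = m(\De_0)$ cited from \cite{Sc95} (Example 18.7) yields the asserted equality. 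This part is pure concatenation; the only point to verify is the reading of the LSW count as a count of components, which is the version established in \cite{Sc95}.

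The substantive step is the claim that for $d=1$ the limit superior is an ordinary limit. Writing $\La = N\Z$, so that $\langle \La \rangle = N$ and $|\Z/\La| = N$, I must show that $\lim_{N\to\infty} \tfrac{1}{N} \log T(G_{N\Z})$ exists and equals $m(\De_0)$. The plan is to compute $T(G_{N\Z})$ spectrally. The Laplacian of the $N$-fold cyclic cover $G_{N\Z}$ is block-circulant with symbol $L(x)$, so its spectrum is the union, over the $N$-th roots of unity $\zeta = \w^{\,j}$ with $\w = e^{2\pi i/N}$, of the spectra of the matrices $L(\zeta)$. By Kirchhoff's matrix-tree theorem applied to $G_{N\Z}$, the complexity $T(G_{N\Z})$ equals the product of the nonzero eigenvalues of this Laplacian divided by a factor polynomial in $N$ (coming from the vertex count and the bounded number of components). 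Since $\De_0(x) = \det L(x)$ and only finitely many $\zeta$ on the unit circle satisfy $\De_0(\zeta) = 0$---each contributing a bounded amount to the product of nonzero eigenvalues---we get, up to a factor polynomial in $N$,
\[
T(G_{N\Z}) \;\doteq\; \prod_{\zeta^N = 1,\,\De_0(\zeta)\neq 0} |\De_0(\zeta)|,
\]
so $\tfrac{1}{N}\log T(G_{N\Z})$ differs by $o(1)$ from a Riemann sum for $\int_0^1 \log|\De_0(e^{2\pi i\theta})|\,d\theta = m(\De_0)$.

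The hard part is proving that this Riemann sum converges to the improper integral, since $\De_0$ has zeros on the unit circle---it is divisible by $(x-1)^2$ and may vanish at further roots of unity---so $\log|\De_0|$ has logarithmic singularities there and naive equidistribution is not enough. The plan is to factor $\De_0(x) = c\,x^{k}\prod_i (x-\lambda_i)$ over $\C$ and handle each factor via the identity $\prod_{\zeta^N=1}(\zeta - \lambda) = \pm(\lambda^N - 1)$, reducing the claim to $\tfrac1N \log|\lambda^N - 1| \to \log^+|\lambda| = \max(0, \log|\lambda|)$; summing these limits over the $\lambda_i$ reproduces $m(\De_0)$ by Jensen's formula. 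The genuine obstacle lies in the roots $\lambda_i$ of modulus one: to obtain a limit rather than merely a limit superior one must rule out exponentially small values of $|\lambda_i^N - 1|$, which holds because the $\lambda_i$ are algebraic (the coefficients of $\De_0$ lie in $\Z$) and a Diophantine lower bound of Gelfond--Baker type forces $|\lambda_i^N - 1| \ge N^{-O(1)}$; the finitely many terms where $\lambda_i$ is itself an $N$-th root of unity are excluded as singular points and contribute only $O(\tfrac{\log N}{N})$. This singularity bookkeeping is where the real care is needed; everything preceding it is assembly.
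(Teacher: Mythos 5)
Your proposal matches the paper's argument: the paper obtains the theorem exactly by combining Proposition \ref{counting} (which exhibits $\text{Per}_\La(\si)$ as a union of $T(G_\La)$ tori, so that the component count is the complexity) with the Lind--Schmidt--Ward periodic-point formula for entropy and the identity $h(\si)=m(\De_0)$ from \cite{Sc95}, Example 18.7. For the $d=1$ refinement the paper simply cites \cite{LSW90}, and your spectral computation of $T(G_{N\Z})$ via roots of unity, Jensen's formula, and the Gelfond--Baker lower bound on $|\lambda^N-1|$ is a correct unpacking of precisely that citation rather than a genuinely different route.
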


\begin{remark}  The statement about $d=1$ is shown in \cite{LSW90}. 
When $d>1$ and $G$ is connected, the limit superior can again be replaced with an ordinary limit.  This was proved by Lyons in \cite{Ly05} using measure theoretic techniques. The stronger result  follows also from the algebraic dynamical approach here, as we show in \cite{SW15}. \end{remark} \bigskip

{\sl For the remainder of the section we assume that $G$ is connected.}\\

Let $\La$ be a finite-index subgroup of $\Z^d$ and let $R=R(\La)$ a fundamental domain.  Let $G|_R$ denote the full subgraph on the vertices with indices in $R$. The number of such vertices is $s = k |\Z^d/\La \Z^d|$, where $k$ is the number of vertex orbits of $G$. Taking a limit over increasingly large domains $R$ such that $G|_R$ is connected, one defines the \emph{thermodynamic limit} (also called the \emph{bulk limit} or \emph{spanning tree constant})
\begin{equation*} \lambda_G = \lim_{s \to \infty} \frac{1}{s}\log \t(G|_R), \end{equation*}
where $\t$ is the number of spanning trees. In the literature, $R$ is usually chosen to be a $d$-dimensional cube.

\begin{remark} In the examples of lattices most often considered, $G|_R$ is connected for every rectangular fundamental domain $R$.  However, in general $G|_R$ need not be connected. Examples are easy to construct. \end{remark}

Partition functions and other analytic tools have been used to compute growth rates of the number of spanning trees; see, for example,  \cite{Wu77}, \cite{SW00}, \cite{CS06} and \cite{TW10}. In \cite{BP93}, Burton and Pemantle  considered an {\sl essential spanning forest process} for locally finite graphs with free $\Z^d$-symmetry.  This process is a weak limit of uniform measure on the set of spanning trees of $G|_R$ as $\<\La\>\to \infty$, and its (measure-theoretic) entropy is seen to be $k\lambda_G$.  For the case $k=1$,  R. Solomyak \cite{So98} proved by analytic methods that this entropy is equal to the Mahler measure of the polynomial we have called $\De_0$.

\begin{theorem}\label{thermo} The sequences $\{\t(G_\La)\}$ and $\{\t(G|_R)\}$ have the same exponential growth rate as $\<\La\>\to\infty$, provided each $G|_R$ is connected.  Thus 
$\lambda_G = \frac{1}{k}m(\De_0)$.
\end{theorem}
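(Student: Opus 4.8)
The plan is to isolate the single substantive claim, namely that the periodic covers $G_\La$ and the free blocks $G|_R$ have the same spanning‑tree growth rate, and to deduce the value from material already in hand. Observe first that since $G$ is connected each $G_\La$ is connected, so $T(G_\La)=\t(G_\La)$, and since $|\Z^d/\La|=s/k$, Theorem \ref{trees} reads $\frac1s\log\t(G_\La)\to\frac1k m(\De_0)$. Hence it suffices to prove that $\frac1s\bigl(\log\t(G_\La)-\log\t(G|_R)\bigr)\to 0$ as $\langle\La\rangle\to\infty$; the equality of growth rates is then immediate, and the formula $\lambda_G=\frac1k m(\De_0)$ follows once we know the defining limit for $G|_R$ exists, which the spectral argument below supplies.

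The first concrete step is a structural comparison. Both graphs have vertex set indexed by $R\times\{1,\dots,k\}$, and an edge of $G$ with both endpoints in $R$ descends to an edge present in each; the only discrepancy is the boundary‑crossing edges, which wrap around in $G_\La$ but are deleted in $G|_R$. Thus $G_\La$ is obtained from $G|_R$ by adjoining a set of $b=O(|\partial R|)$ edges (self‑loops and multi‑edges are harmless for spanning‑tree counts). Because $G$ has bounded degree and $R$ may be taken to be a box expanding in all directions, $b=o(s)$. Translating into spectra via Kirchhoff's theorem, exactly as in Proposition \ref{counting}, for a connected graph $H$ on $N$ vertices one has $\log\t(H)=\sum_{i\ge2}\log\lambda_i(L_H)-\log N$, where $0=\lambda_1<\lambda_2\le\cdots\le\lambda_N$ are the Laplacian eigenvalues, all lying in a fixed interval $[0,c]$ by bounded degree. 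Additivity of the Laplacian over edges gives $L_{G_\La}=L_{G|_R}+P$ with $P$ positive semidefinite of rank at most $b$, so Weyl's inequalities yield the interlacing $\lambda_i(L_{G|_R})\le\lambda_i(L_{G_\La})\le\lambda_{i+b}(L_{G|_R})$; equivalently the counting functions $F_H(t)=\#\{i:\lambda_i(L_H)\le t\}$ satisfy $0\le F_{G|_R}(t)-F_{G_\La}(t)\le b$ for all $t$. The semidefiniteness already forces $\t(G_\La)\ge\t(G|_R)$, so only the matching upper bound remains.

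For the bulk I would regularize: for fixed $\epsilon>0$ the function $\phi_\epsilon(t)=\log(t+\epsilon)$ has total variation $\log(1+c/\epsilon)$ on $[0,c]$, and integration by parts (both boundary terms vanish, as $F_{G_\La}$ and $F_{G|_R}$ agree at $t=0$ and at $t=c$) gives $\bigl|\int\phi_\epsilon\,d(F_{G_\La}-F_{G|_R})\bigr|\le b\log(1+c/\epsilon)=o(s)$. Hence $\frac1s\bigl(\log\det(L_{G_\La}+\epsilon I)-\log\det(L_{G|_R}+\epsilon I)\bigr)\to 0$ for each fixed $\epsilon$. The difference between $\frac1s\log\det(L_H+\epsilon I)$ and $\frac1s\log\t(H)$ is $\frac1s\log\epsilon+\frac1s\sum_{\lambda_i>0}\log(1+\epsilon/\lambda_i)$ up to the negligible $\frac{\log N}{N}$, so everything reduces to showing that the small‑eigenvalue term $\frac1s\sum_{\lambda_i>0}\log(1+\epsilon/\lambda_i)$ tends to $0$ as $\epsilon\to0$, uniformly over $\La$, for both families.

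This uniform control of near‑zero eigenvalues is the main obstacle, and it is where the polynomial $\De_0$ enters. For the periodic graphs the eigenvalues of $L_{G_\La}$ are the union, over the characters $z$ of $\Z^d/\La$, of the eigenvalues of the Fourier‑transformed matrix $L(z)$, whose determinant is $\De_0(z)$; as $\langle\La\rangle\to\infty$ these characters equidistribute on $\T^d$, so the empirical spectral measure converges to the density of states $\mu$ of $G$, and the finiteness of $m(\De_0)=\int_{\T^d}\log|\De_0(z)|\,dz$ bounds the mass of $\mu$, and hence of the $\nu_{G_\La}$, near $0$ with no escape of $\log$‑mass to $-\infty$. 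The interlacing bound $F_{G|_R}(\delta)\le F_{G_\La}(\delta)+b$ transports this control to the free blocks, closing the $\epsilon\to0$ limit for $G|_R$ as well. Combining the bulk estimate with this uniform bound gives $\frac1s\log\t(G|_R)\to\int\log t\,d\mu$ as a genuine limit, and the identification $\int\log t\,d\mu=\frac1k\int_{\T^d}\log|\det L(z)|\,dz=\frac1k m(\De_0)$ finishes the proof. I expect the uniform small‑eigenvalue estimate to be the only real difficulty; the bulk comparison is just bounded‑variation bookkeeping built on rank‑$b$ interlacing.
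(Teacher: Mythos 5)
Your architecture is genuinely different from the paper's, which avoids spectra entirely: there one observes that every spanning tree of $G|_R$ is already a spanning tree of $G_\La$ (giving $\lambda_G\le\frac1k m(\De_0)$), and conversely restricts a spanning tree of $G_\La$ to $G|_R$ and extends it injectively to a spanning tree of a slightly larger block $G|_{R'}$ with $s'/s\to1$ (with a minor modification when $d=1$). Your bulk comparison is sound: $L_{G_\La}=L_{G|_R}+P$ with $P$ positive semidefinite of rank at most $b=o(s)$, the rank-$b$ interlacing is correct, and the bounded-variation estimate $\bigl|\int\log(t+\epsilon)\,d(F_{G_\La}-F_{G|_R})\bigr|\le b\log(1+c/\epsilon)$ is fine. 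The problems are concentrated in the step you yourself call the main obstacle, the control of $\frac1s\sum_{\lambda_i>0}\log(1+\epsilon/\lambda_i)$, and there are two genuine gaps there.

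First, for the periodic family: weak convergence of the empirical spectral measures $\nu_{G_\La}$ to the density of states $\mu$, together with $\int\log t\,d\mu>-\infty$, does \emph{not} rule out escape of $\log$-mass to $-\infty$ along the sequence (each $\nu_{G_\La}$ could carry mass $1/s$ at $e^{-s}$ and still converge weakly); excluding this is precisely the hard content of the Lind--Schmidt--Ward theorem, not a consequence of equidistribution plus finiteness of $m(\De_0)$. This is repairable inside your framework: subtract the already-established limit $\frac1s\log\t(G_\La)\to\frac1k m(\De_0)$ of Theorem \ref{trees} from the limit $\frac1s\sum_i\log(\lambda_i+\epsilon)\to\frac1k\int_{\T^d}\log\det(L(z)+\epsilon I)\,dz$, which \emph{is} a legitimate equidistribution statement because that integrand is continuous and bounded below by $k\log\epsilon$; then let $\epsilon\downarrow0$ by monotone convergence. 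Second, and more seriously, the transfer to the free blocks fails as written: the counting-function bound $F_{G|_R}(t)\le F_{G_\La}(t)+b$ controls only \emph{how many} eigenvalues of $L_{G|_R}$ lie near $0$, not \emph{how small} they are. Pairing via $\lambda_{i+b}(L_{G|_R})\ge\lambda_i(L_{G_\La})$ leaves an untreated remainder $\sum_{j=2}^{b+1}\log\bigl(1+\epsilon/\lambda_j(L_{G|_R})\bigr)$ over the $b$ smallest nonzero eigenvalues of the block; if those were of size $e^{-s}$ this remainder would be of order $bs$, which is not $o(s)$ when $b\to\infty$. You need an additional quantitative input, e.g.\ the standard spectral-gap bound $\lambda_2(L_H)\ge c/(N\cdot\mathrm{diam}\,H)$ for a connected graph $H$ on $N$ vertices, which caps each such term by $O(\log s)$ and the remainder by $O(b\log s)=o(s)$. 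With these two repairs your proof closes (and treats $d=1$ and $d>1$ uniformly, which the paper's combinatorial argument does not), but as submitted the decisive estimate is asserted rather than proved.
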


\begin{remark} The recognition that asymptotic complexity can be measured by considering either quotients or subgraphs is not new. A very general result is appears in \cite{Ly05} (see Theorem 3.8). The proof below for the graphs that we consider is relatively elementary. \end{remark} 

\begin{proof}
Since every spanning tree of $G|_R$ can be viewed as a spanning tree for $G_\La$, we see immediately that $\lambda_G \le\frac{1}{k}m(\De_0)$.

Let $T$ be a spanning tree for $G_\La$. We may also regard $T$ as a periodic spanning tree for $G$.  Its restriction $T_R$ to $G|_R$ might not be connected. However, when $d>1$, within a slightly larger domain $R'$ containing $R$ we can extend $T_R$ to a spanning tree $T'$.  We take $R'$ to consist of elements of $\Z^d$ that are some bounded distance from $R$, so that every edge of $G$ with a vertex in $R$ has its other vertex in $R'$, and the graph $G|_{R'\setminus R}$ is connected.   (This is where $d>1$ is needed.)  Then the number $s'$ of vertices of $R'$ satisfies $s'/s \to 1$ as $s \to \infty$. We can choose $T'$ to contain all the edges of $T$ with at least one vertex in $G|_R$, thereby ensuring that $T \mapsto T'$ is an injection. This gives the reverse inequality
$\lambda_G \ge \frac{1}{k}m(\De_0)$.

In the case $d=1$, $G|_{R'\setminus R}$ consists of two connected components.  Applying the above construction, we can obtain a graph $T'$ that is either a spanning tree or a two-component spanning forest for $G|{R'}$.  Any such forest can be obtained from a spanning tree by deleting one of the $s'-1$ edges.  Hence $\tau(G_\La)$ is no more than $s'$ times the number of spanning trees of $G|_R'$.  This rough upper bound suffices to give the desired growth rate.
\end{proof}

\begin{prop} \label{estimate1} (Cf. \cite{CS04}) Let $G$ be a locally finite connected graph with cofinite free $\Z^d$-symmetry.  Then 
\begin{equation*} m(\De_0) \le |\overline{V}| \log \frac{2 |\overline{E}|}{|\overline{V}|},\end{equation*}
where $\overline{V}, \overline{E}$ are the vertex and edge sets, respectively, of $\overline{G}$. \end{prop}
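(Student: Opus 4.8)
The plan is to combine Hadamard's determinant inequality with Jensen's inequality, exploiting the fact that on the torus $\T^d$ the evaluated Laplacian $L(x_1,\dots,x_d)$ is a positive semidefinite Hermitian matrix. First I would record the Fourier-analytic shadow of the identity $L=QQ^T$ holding over the infinite graph $G$, namely the factorization $L(x)=Q(x)Q(x^{-1})^T$, where $Q(x)$ is the $n\times m$ equivariant incidence matrix ($n=|\overline V|$, $m=|\overline E|$). Since the entries of $L$ have integer coefficients and $x_j^{-1}=\overline{x_j}$ on the unit torus, this gives $L(x)=Q(x)Q(x)^{*}$, so $L(x)\succeq 0$ is Hermitian for every $x\in\T^d$; in particular its diagonal entries $L_{ii}(x)$ are nonnegative reals and $\det L(x)\ge 0$.

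Next I would apply Hadamard's inequality for positive semidefinite matrices, $\det L(x)\le \prod_{i=1}^{n} L_{ii}(x)$, and integrate the logarithm over $\T^d$. Because $\De_0=\det L$ up to units, this yields
$$ m(\De_0)=\int_{\T^d}\log\det L(x)\,d\theta \;\le\; \sum_{i=1}^{n}\int_{\T^d}\log L_{ii}(x)\,d\theta \;=\;\sum_{i=1}^n m(L_{ii}). $$
Applying Jensen's inequality (concavity of $\log$, with respect to the probability measure $d\theta$) to each term gives $m(L_{ii})\le \log\int_{\T^d} L_{ii}(x)\,d\theta$. The key observation is that integrating $L_{ii}(x)=D_{ii}-A_{ii}(x)$ over the torus annihilates every nonconstant monomial, so $\int_{\T^d}L_{ii}\,d\theta = D_{ii}-(\text{constant term of }A_{ii})\le D_{ii}=d_i$, the degree of $v_i$ in $\overline G$. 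Hence $m(L_{ii})\le\log d_i$.

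Finally I would close with the arithmetic--geometric mean inequality: $\sum_i \log d_i=\log\prod_i d_i \le n\log\big(\tfrac1n\sum_i d_i\big)$, and since $\sum_i d_i=2|\overline E|$ this is exactly the asserted bound $|\overline V|\log\frac{2|\overline E|}{|\overline V|}$.

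The main obstacle is the positive-semidefiniteness on the torus, that is, getting the Hermitian factorization $L(x)=Q(x)Q(x)^{*}$ right and, in particular, handling loops of $\overline G$ correctly in $Q(x)$: an edge whose two lifts land on the same vertex orbit contributes $x^{\nu}-1$ to a single row of $Q(x)$ rather than $\pm1$ to two rows, so one must verify $\int_{\T^d}L_{ii}\,d\theta\le d_i$ in that case as well (the loop contributes $2-x^\nu-x^{-\nu}$ to $L_{ii}$, with torus average $2$, matching its contribution of $2$ to $d_i$). Once $L(x)\succeq0$ and $\det L(x)\ge0$ are in hand, Hadamard and Jensen are routine; one should also note that $\det L\not\equiv0$, so that $m(\De_0)$ is defined, because $G$ is connected and hence the kernel of $L(x)$ is nontrivial only at $x=1$.
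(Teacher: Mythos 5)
Your argument is correct, but it reaches the bound by a genuinely different route than the paper. The paper's proof is a short deduction from machinery already in place: it quotes Grimmett's combinatorial bound $\t(G_\La) \le \frac{1}{|V_\La|}\bigl(\frac{2|E_\La|}{|V_\La|-1}\bigr)^{|V_\La|-1}$ for the finite quotients $G_\La$, feeds this into Theorem \ref{trees} (which identifies $m(\De_0)$ with the exponential growth rate of the tree numbers $T(G_\La)$), and lets $|\Z^d/\La|\to\infty$. You instead estimate the Mahler measure integral directly: the Hermitian factorization $L(x)=Q(x)Q(x)^*$ on $\T^d$ gives positive semidefiniteness, Hadamard bounds $\det L(x)$ by $\prod_i L_{ii}(x)$, Jensen bounds $\int \log L_{ii}$ by $\log \int L_{ii} \le \log d_i$, and the arithmetic--geometric mean inequality finishes. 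All the steps check out, including your treatment of loops (a lift of a loop with trivial monodromy only decreases $\int L_{ii}$ below $d_i$, so the inequality is preserved), and the two facts you need from connectedness of the infinite graph $G$ --- that $\det L\not\equiv 0$ (the kernel of $Q(x)^*$ is trivial for $x\ne\mathbf 1$ on the torus) and that every $d_i\ge 1$ so the logarithms make sense --- are both available. The comparison is instructive: Grimmett's inequality is itself proved by applying the arithmetic--geometric mean inequality to the eigenvalues of a finite Laplacian, so your argument is essentially the Fourier-dual of the paper's, carried out once on the torus rather than on a sequence of finite quotients. What your version buys is independence from the dynamical input --- it needs neither Theorem \ref{trees} nor the Lind--Schmidt--Ward entropy formula behind it, only the definition of $m$ and the structure of $L$; what the paper's version buys is brevity, given that Theorem \ref{trees} has already been established for other purposes.
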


\begin{proof} By a result of G.R. Grimmett \cite{Gr76}, for every finite-index sublattice $\La$ of $\Z^d$
\begin{equation*} \t(G_\La) \le \frac{1}{|V_\La|} \bigg( \frac{2 |E_\La|}{|V_\La| -1}\bigg)^{|V_\La| -1}, \end{equation*}
where $G_\La=(V_\La,E_\La)$. Letting $r = |\Z^d/\La|$, we have
$$m(\De_0) \le  \limsup_{r \to \infty} \frac{1}{r} \log
\frac{1}{r|\overline{V}|} \bigg( \frac{2r |\overline{E}|}{r|\overline{V}| -1}\bigg)^{r|\overline{V}| -1}.$$
The result follows by elementary analysis. 
\end{proof}

\begin{example} For the ladder graph in Example \ref{ladder}, 
$$m(\De_0(x)) = m(x^2-4x +1) = \log(2 + \sqrt{3}) \approx 1.317.$$
The thermodynamic limit $\lambda_G\approx 0.658$ was computed in \cite{SW00}.
The upper bound of Proposition \ref{estimate1} (with $|\overline{V}|= 2, |\overline{E}|=3$) is $2.198$.  \end{example}

\begin{example} For the grid in Example \ref{2grid}, $\lambda_G=m(4-x-x^{-1}-y-y^{-1})\approx 1.166$. The upper bound of Proposition \ref{estimate1} (with $|\overline{V}|=1, |\overline{E}|=2$) is $1.386$. 
\end{example}

\begin{figure}
\begin{center}
\includegraphics[height=2 in]{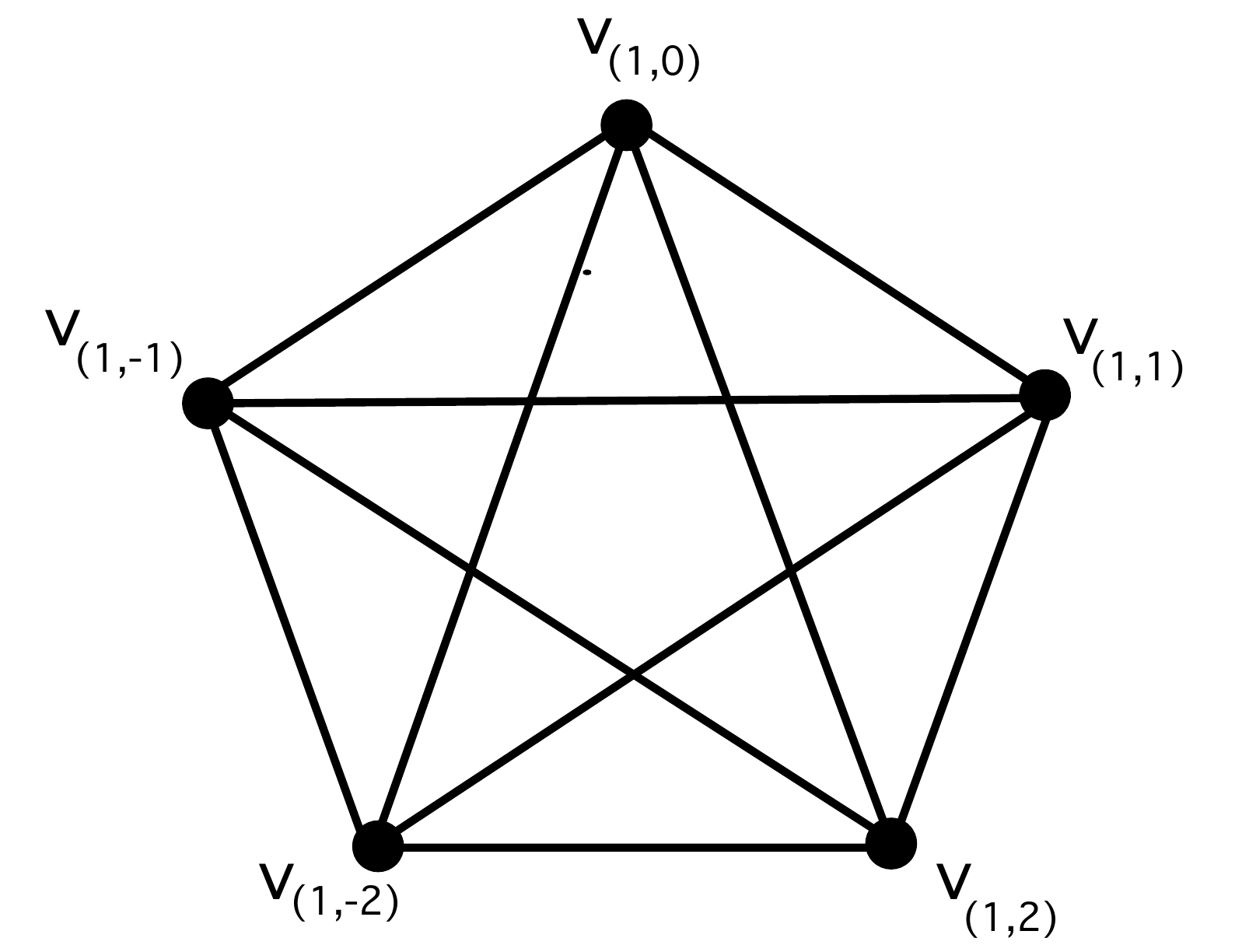}
\caption{Circulant Graph $C_5^{1,2}$}
\label{circulant}
\end{center}
\end{figure}

\begin{figure}
\begin{center}
\includegraphics[height=3 in]{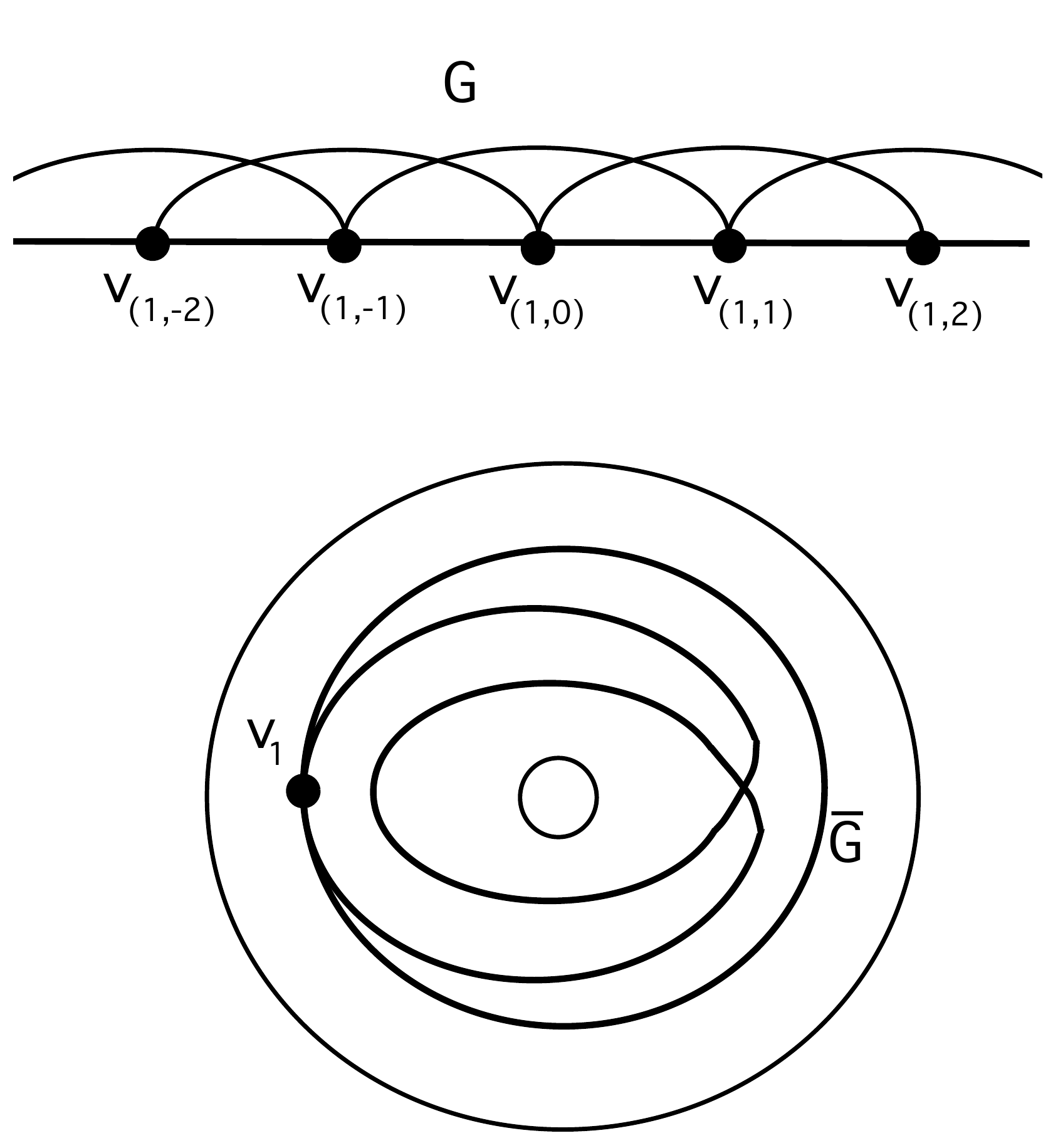}
\caption{Circulant Cover $G$ and Quotient $\overline{G}$}
\label{circcov}
\end{center}
\end{figure}

\begin{example}  \label{circulant} A \emph{circulant graph} $C_n^{s_1, \ldots, s_k}$ is a $2k$-regular graph with $n$ vertices $v_1, \ldots, v_n$ such that $v_i$ is adjacent to $2k$ vertices
$v_{i \pm s_1}, \ldots, v_{i \pm s_k}$, where indices are taken modulo $n$.  Several authors \cite{LPW97}, \cite{GYZ10} have investigated the growth rate of $\t(C_n^{s_1, \ldots, s_k})$ using a blend of combinatorics and analysis. We recover the growth rates
very quickly with algebraic methods. 

Let $\La = (n) \subset \Z$. The graph $C_n^{s_1, \ldots, s_k}$ can be regarded as an $n$-sheeted cover $G_\La$ of a graph $\overline{G}$ with a single vertex $v$ and $2k$ edges immersed in the annulus. The $j$th edge winds $s_j$ times. A simple example appears in Figures \ref{circulant} and \ref{circcov} below. 

It is immediate that the Laplacian polynomial is $$\De_0(x) = 2k -  \sum_{j=1}^k (x^{s_j}+x^{-s_j}).$$
The  growth rate $\lim_{n\to \infty} \frac{1}{n} \log \t(C_n^{s_1, \ldots, s_k})$ is equal to the logarithmic
Mahler measure $m(\De_0)$. 

When $s_1=1, s_2=2$ and $s_j =0$ for $j>2$, as in Figure \ref{circulant}, $\De_0(x) = 4 - x - x^{-1} - x^2-x^{-2} = (x-1)^2(x^2+3x+1)$ and 
$\lim_{n\to \infty} \frac{1}{2}\t(C_n) = (3+\sqrt 5)/2$, as obtained in \cite{GYZ10} (see p. 795).

In general, 
$$\lim_{n\to \infty}  \frac{1}{n} \log \t(C_n^{s_1, \ldots, s_k})= m(\De_0)$$
$$ = \int_0^1 \log \Big| 2 k - \sum_{j=1}^k (e^{2 \pi i s_j\theta} + e^{-2 \pi i s_j\theta})\Big| d\theta$$
$$ =  \int_0^1  \log  \Big(\sum_{j=1}^k (2- 2 \cos 2 \pi s_j \theta) \Big) d\theta$$
$$= \int_0^1  \log  \Big(4  \sum_{j=1}^k  \sin^2  \pi s_j \theta  \Big) d\theta$$
$$=\log 4 +   \int_0^1 \log  \Big( \sum_{j=1}^k \sin^2 \pi s_j \theta  \Big) d\theta,$$
which agrees with Lemma 2 of \cite{GYZ10}. 

We conclude with a comment about Theorem 6 of \cite{GYZ10}, which states: 
$$\lim_{s_k \to \infty} \ldots \lim_{s_1 \to \infty} \lim_{n \to \infty} \frac{1}{n} \log \t(C_n^{s_1, \ldots, s_k}) = \log 4+ \int_0^1 \cdots \int_0^1 \log\Big( \sum_{j=1}^k \sin^2 \pi \theta_j \Big) d\theta_1 \cdots d\theta_k.$$ In view of Definition \ref{mahler}, this integral is simply the Mahler measure of the $k$-variable polynomial that is obtained from $\De_0(x)$ by replacing each $x^{s_j}+x^{-s_j}$ with $x_j+ x_j^{-1}$.  Thus Theorem 6 is an instance of the general limit formula
$$\lim_{s_k \to \infty} \ldots \lim_{s_1 \to \infty}m(f(x, x^{s_1},\ldots, x^{s_k}))=m(f(x_0,x_1, \ldots, x_k)),$$
which is found in Appendix 4 of \cite{Bo81}.

\end{example}

\bigskip

\ni Department of Mathematics\\
\ni University of Southern Mississippi\\Hattiesburg, MS 39406 USA\\
\ni Email: Kalyn.Lamey@usm.edu\\ 

\ni Department of Mathematics and Statistics,\\
\ni University of South Alabama\\ Mobile, AL 36688 USA\\
\ni Email: silver@southalabama.edu, swilliam@southalabama.edu
\end{document}